\numberwithin{equation}{section}
\def\H{\mathcal H}
\def\R{\mathbb R}
\def\N{\mathbb N}
\newcommand{\dist}{\mathop{\mathrm{dist}}}
\def\e{\varepsilon}
\def\s{\sigma}
\def\vphi{\varphi}
\def\g{\gamma}
\def\Id{{\rm Id}}
\def\spt{{\rm spt}}
\def\pa{\partial}
\def\00{{\bf 0}}
\def\F{\mathcal{F}}
\def\P{\mathcal{P}}
\def\CC{\mathcal{C}}
\newcommand{\D}{\Delta}
\newcommand{\cc}{\subset\subset}
\def\Lip{{\rm Lip}\,}
\newcommand\res{\mathop{\hbox{\vrule height 7pt width .3pt depth 0pt \vrule height .3pt width 5pt depth 0pt}}\nolimits}
\def\weak{\stackrel{*}{\rightharpoonup}}
\def\FF{\mathbf{F}}
\def\D{\mathsf{D}}
\def\Id{\mathrm{Id}}
\theoremstyle{plain}
\newtheorem{theorem}{Theorem} [section]
\newtheorem{lemma}[theorem]{Lemma}
\newtheorem{proposition}[theorem]{Proposition}
\theoremstyle{definition}
\newtheorem{definition}[theorem]{Definition}
\newtheorem{remark}[theorem]{Remark}
\newtheorem*{ack}{Acknowledgements}
\title{A direct approach to the anisotropic Plateau problem}
\author{C. De Lellis}
\address{Institut f\"ur Mathematik, Universitaet Z\"urich, Winterthurerstrasse 190, CH-8057 Z\"urich, Switzerland}
\email{camillo.delellis@math.uzh.ch}
\author{A. De Rosa}
\address{Institut f\"ur Mathematik, Universitaet Z\"urich, Winterthurerstrasse 190, CH-8057 Z\"urich, Switzerland}
\email{antonio.derosa@math.uzh.ch}
\author{F. Ghiraldin}
\address{Institut f\"ur Mathematik, Universitaet Z\"urich, Winterthurerstrasse 190, CH-8057 Z\"urich, Switzerland. Max Planck Institut for Mathematics in the Sciences, Inselstrasse 22, 04103 Leipzig, Germany}
\email{francesco.ghiraldin@math.uzh.ch, Francesco.Ghiraldin@mis.mpg.de}
\begin{document}

\begin{abstract}
We prove a compactness principle for the anisotropic formulation of the Plateau problem in codimension one, along the same lines of previous works of the authors \cite{DelGhiMag,DePDeRGhi}. In particular, we perform a new strategy for proving the rectifiability of the minimal set, avoiding the Preiss' Rectifiability Theorem \cite{Preiss}.
\end{abstract}

\maketitle

\section{Introduction}

The anisotropic Plateau's problem aims at finding an energy minimizing surface 
spanning a given boundary when the energy functional is more general than the usual surface area (as in the standard Plateau's problem) and is obtained integrating a general Lagrangian $F$ over the surface. In particular, the integrand depends on the position and the tangent space to the surface.

As in the case of the area integrand, 
\cite{DeGiorgiSOFP1,federer60,reifenberg1,Almgren68,DavidSemmes,HarrisonPugh14,DelGhiMag,DePDeRGhi}, many definitions of boundary conditions (both homological and homotopical), 
as well as the type of competitors (currents, varifolds, sets) have been considered in the literature. 
An important existence, regularity and almost uniqueness result in arbitrary dimension and codimension was achieved by Almgren in \cite{Almgren68}, using refined techniques from geometric measure theory. In more recent times, the Plateau problem for the area integrand has been investigated in order to give an existence theory which could comprehend several notions of competitors all together \cite{Feuvrier2009,HarrisonPugh14,davidplateau, DelGhiMag,DePDeRGhi}. In particular a very elegant notion of boundary for general closed sets has been introduced by Harrison and Pugh in \cite{HarrisonPugh14}, which proves the existence and regularity for minimizers of the area functional in codimension 1. The same authors in
 \cite{HarrisonPugh15} investigated the ``inhomogeneous Plateau's problem'', where the energy density is isotropic but depends on the space position of the surface, proving existence of a minimizer under a suitable cohomological definition of boundary. 

In this paper, we adopt the same strategy as in \cite{DelGhiMag, DePDeRGhi}, namely we prove a general compactness theorem for minimizing sequences in general classes of rectifiable sets. More precisely, we consider the measures naturally associated to any such sequence and we show that, if a sufficiently large class of deformations are admitted, any weak limit is induced by a rectifiable set, thus providing compactness and semicontinuity under very little assumptions. Our result does not address directly the question whether the limiting set belongs to the original class, which is linked to its closure under weak convergence of measures. However, we can easily show that this is the case for the class considered by Harrison and Pugh in \cite{HarrisonPugh14}, thus giving a generalization of their existence theorem to any (elliptic) anisotropic functional. While we were completing this paper we learned that analogous results have been obtained at the same time by Harrison and Pugh in \cite{HarrisonPugh16}, using different arguments and building upon their previous work \cite{HarrisonPugh15}.

One main difficulty in our approach is to prove the rectifiability of the support of the limiting measure. 
In the paper \cite{DelGhiMag}, the key ingredient to obtain such rectifiability is the classical monotonicity formula for the mass ratio of the limiting measure, which allows to apply Preiss' rectifiability theorem for Radon measures \cite{Preiss,DeLellisNOTES}. 
Such a strategy does not seem feasible for general anisotropic integrands, where the monotonicity of the mass ratio is unlikely to be true, as pointed out in \cite{Allardratio}. Since all the other ingredients of \cite{DelGhiMag} can be easily transported to the anisotropic case, the main goal of this paper is to show how, in codimension one, the rectifiability of the limiting measure follows from the theory of Caccioppoli sets, bypassing the monotonicity formula and the deep result of Preiss.
In particular, we are able to prove the results analogous to those of \cite{DelGhiMag} with a strategy which has some similarities with the one used in \cite{Almgren68}. 

In \cite{DePDeRGhi}, a similar theorem for the area functional was proved in any codimension. The most general case of any codimension and anisotropic energies will be addressed in a further paper by the same authors, see \cite{DePDeRGhi2}, using however different and more sophisticated PDE techniques~\cite{DePDeRGhiCPAM}.
 
The structure of this note is the following: in Section~\ref{s2} we introduce the notation and state the main theorems of the paper. 
 Section~\ref{s3} is devoted to the proof of the main compactness result. In Section~\ref{s4} we analyse the applicability of Theorem~\ref{thm generale} for some specific boundary conditions and investigate the regularity of minimizers.
\medskip
\begin{ack}
This work has been supported by ERC 306247 {\it Regularity of area-minimi\-zing currents} and by
SNF 146349 {\it Calculus of variations and fluid dynamics}. 
\end{ack}

\section{Notation and main results}\label{s2}
The ambient space is the standard euclidean one, \(\R^{n+1}\), 
and $\H^k$ denotes the $k$-Hausdorff measure; 
moreover, for every set $A$, we let $|A|:=\H^{n+1}(A)$ be its Lebesgue measure. 
We will let $U_r(A)$ be the open tubular neighborhood of $A$ of radius $r$. 
Recall that a set \(K\) is said to be \(n\)-rectifiable if it can be covered, 
up to an \(\H^n\) negligible set, by countably many \(C^1\) $n$-dimensional 
submanifolds, see \cite[Chapter 3]{SimonLN}; we also denote by
$G=G(n+1,n)$ the Grassmannian of unoriented $n$-dimensional 
hyperplanes in $\R^{n+1}$. Given an $n$-rectifiable set $K$, 
we denote by $T_K(x)$ the approximate tangent space of $K$ at $x$, 
which exists for $\H^n$-almost every point $x \in K$ \cite[Chapter 3]{SimonLN}. 
Finally, we let (for $x\in \mathbb R^{n+1}$, $0<r<\infty$, $0<a<\infty$ and $0<b<\infty$)
\begin{itemize}
\item $B_{x,r}:=\{y\in\R^{n+1}:|x-y|<r\}$; 
\item $B:=B_{0,1}$;
\item $Q_r := ]-\frac{r}{2}, \frac{r}{2}[^{n+1}$; 
\item $R_{2a,2b}:=[-a,a]^n\times [-b,b]$;
\item $\omega_n := \H^n(B\cap(\R^n\times\{0\}))$ and $\sigma_n:=\H^n(\pa B)$. 
\end{itemize}

The anisotropic Lagrangians considered in the rest of the note will be continuous maps 
$$
F: \R^{n+1}\times G\ni (x,\pi)\mapsto F(x,\pi) \in  \R^+ = ]0, \infty[ ,
$$
verifying the lower and upper bounds 
\begin{equation}\label{cost per area}
0 < \lambda \leq F(x,\pi) \leq \Lambda<\infty \qquad \forall (x,\pi)\in \R^{n+1}\times G.
\end{equation}
Given an $n$-rectifiable set $K$ and an open subset $U\subset \R^{n+1}$, we define:
\begin{equation}\label{energia}
\FF(K,U) := \int_{K\cap U} F(x,T_K(x))\, d\H^n(x) \mbox{ \ \ and \ \ } \FF(K) := \FF(K,\R^{n+1}).
\end{equation}
It will be also convenient to look at the frozen Lagrangian: for $y\in\R^{n+1}$, we let
\begin{equation*}
\FF^y(K,U) := \int_{K\cap U} F(y,T_K(x))\, d\H^n(x).
\end{equation*}

Throughout all the paper, $H\subset \mathbb R^{n+1}$ will denote a closed subset of $\R^{n+1}$. 
Assume to have a class $\mathcal{P} (H)$ of relatively closed $n$-rectifiable subsets $K$ of $\mathbb R^{n+1}\setminus H$: 
one can then formulate the anisotropic Plateau's problem by asking whether the infimum
\begin{equation}
  \label{plateau problem generale}
m_0 :=  \inf \big\{\FF(K) : K\in \mathcal{P}(H)\big \}
\end{equation}
is achieved by some set (which is the limit of a minimizing sequence), if it belongs to the chosen class $\P(H)$ and which additional  regularity properties  it satisfies. 

We next outline a set of flexible and rather weak requirements for $\mathcal{P}(H)$. 

\begin{definition}[Cup competitors]\label{def:cup}
Let $K\subset \mathbb R^{n+1}\setminus H$ and $B_{x,r}\subset \subset \mathbb R^{n+1}\setminus H$.
We introduce the following equivalence relation among points of $\overline{B_{x,r}}\setminus K$: 
 \[
 y_0\sim_{K,x,r} y_1 \quad \Longleftrightarrow\quad \exists \, \gamma\in C^0([0,1],\overline{B_{x,r}}\setminus K) :\, \gamma(0)=y_0, \, \gamma(1) = y_1,\, \gamma(]0,1[)\subset B_{x,r}
 \]
(where $x$ and $r$ are clear from the context we will omit them and simply write $\sim_{K}$). 
We enumerate as \{$\Gamma_i(K, x,r)\}$ the equivalence classes in $\partial B_{x,r} /\sim_{K,x,r}$ (where the index $i$ varies either among all natural numbers or belongs to a finite subset of them). 
The cup competitor associated to $\Gamma_i(x,r)$ for $K$ in $B_{x,r}$ is
\begin{equation}
  \label{cup comp}
  \big(K\setminus B_{x,r}\big) \cup \big((\partial B_{x,r}) \setminus \Gamma_i(K, x,r)\big)\,.
\end{equation}
\end{definition}
For further reference we also introduce the sets
 \begin{equation}\label{e:Omega}
\Omega_i(K, x,r)  =\{ z \in B_{x,r}\setminus K : \exists \, y \in \Gamma_i(K, x,r)  \mbox{ such that } z\sim_{K,x,r} y \}\, .
\end{equation}
The dependence on $K$, $x$ and $r$ will be sometimes suppressed if clear from the context. 
It is easy to see that the associated sets $\Omega_i(K, x,r)$ are connected components of $B_{x,r}\setminus K$ (possibly not all of them). 

\begin{definition}[Good class]\label{def good class}
A family $\mathcal{P} (\mathbf{F}, H)$ of relatively closed subsets $K\subset \mathbb R^{n+1}\setminus H$ is called a {\em good class} if 
 for any $K\in \mathcal{P} (\mathbf{F}, H)$, for every $x\in K$ and for a.e. $r\in (0, \dist (x, H))$ the following holds:
\begin{equation}
  \label{inf good class}
  \inf \big \{ \FF (J) : J \in \mathcal{P} (H)\,,J\setminus \overline{B_{x,r}} = K\setminus \overline {B_{x,r}}\big \} \leq \FF (L)\,
\end{equation}
whenever $L$ is any cup competitor for $K$ in $B_{x,r}$.
\end{definition}

\begin{remark}\label{r:compare_cup}
Observe that the definition of cup competitors is a slight modification of that of \cite{DelGhiMag}, where $\Gamma_i (K, x,r)$ were taken to be connected components of $\partial B_{x,r}\setminus K$: observe however that, for every cup competitor in \cite{DelGhiMag}, we can find a cup competitor as above which has at most the same area, since each $\Gamma_i (K,x,r)$ is a union of connected components of $\partial B_{x,r}\setminus K$ and each connected component of $\partial B_{x,r}\setminus K$ is contained in at least one $\Gamma_i (K,x,r)$. Finally good classes in this paper do not assume any kind of comparisons with cones, as it is the case of \cite{DelGhiMag}.
\end{remark}

The point of our note is that the notion of good class is enough to ensure that any weak$^*$ limit of a minimizing sequence is a rectifiable measure and that a suitable lower semicontinuity statement holds for energies $\FF$ which satisfy the usual ellipticity condition of \cite[5.1.2]{FedererBOOK}, cf. Theorem \ref{thm generale} below. In particular, as shown in \cite[5.1.3-5.1.5]{FedererBOOK}, the convexity of the integrand $F$ is a sufficient condition and we take it therefore as definition here. 

\begin{definition}[Elliptic anisotropy, {\cite[5.1.2-5.1.5]{FedererBOOK}}]
$F$ is elliptic if its even and positively $1$-homogeneous extension to $\mathbb R^{n+1}\times (\Lambda_n (\mathbb R^{n+1})\setminus \{0\})$ is $C^2$ and it is uniformly convex in the $\pi$ variable on compact sets. 
\end{definition}

Actually the only points required in the proof of Theorem \ref{thm generale} are the lower semicontinuity of the functional $\mathbf{F}$ under the usual weak convergence of reduced boundaries of Caccioppoli sets and the following estimate on the oscillation of $F$ over 
compact sets $V\subset\subset\R^{n+1}$:
\begin{equation}\label{oscill}
\sup_{x,y\in V\, S,T\in G(n+1,n)}|F(x,T)-F(y,S)|\leq \omega_V (|x-y|+\|T-S\|),
\end{equation}
where $\omega_V$ is a modulus of continuity which depends upon $V\times G(n+1,n)$ and $\|\cdot\|$ is the standard metric on $G (n,n+1)$ defined as in \cite[Chapter 8, Section 38]{SimonLN}.
In particular the $C^2$ regularity of the definition above can be considerably relaxed.

We now have all the tools to state our main theorem. A minimizing sequence $\{K_j\} \subset \mathcal{P} (\mathbf{F}, H)$ in Problem \eqref{plateau problem generale} satisfies the property $\FF(K_j) \to m_0$, and throughout the paper we will assume $m_0$ to be finite. 

\begin{theorem}\label{thm generale}
Let $H\subset \mathbb R^{n+1}$ be closed and $\mathcal{P} (\mathbf{F}, H)$ be a good class. 
Let $\{K_j\}\subset \mathcal{P}(\mathbf{F}, H)$ be a minimizing sequence and assume $m_0 < \infty$. Then, up to subsequences, the measures $\mu_j := F(\cdot,T_{K_j}(\cdot))\H^n \res K_j$ converge weakly$^\star$ in $\mathbb R^{n+1}\setminus H$ to a measure $\mu = \theta \H^n \res K$, where $K = \spt\, \mu$ is an  $n$-rectifiable set and  $\theta \geq c_0$ for some constant $c_0 (\mathbf{F}, n)$. 

Moreover, if $\mathbf{F}$ is elliptic, then $\liminf_j\FF(K_j)\ge \FF(K)$ (that is $\theta (x)\geq F (x, T_K (x))$) and
in particular, if $K \in \mathcal{P} (\mathbf{F}, H)$, then $K$ is a minimum for Problem \eqref{plateau problem generale} and thus $\theta (x) = F (x, T_K (x))$.
\end{theorem}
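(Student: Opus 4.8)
The proof splits into two parts: (i) the rectifiability of the limiting measure $\mu$ and the density lower bound, and (ii) the semicontinuity under the ellipticity hypothesis. For part (i), the strategy that replaces Preiss' theorem is to work, inside small balls, with the local structure of the competitors. First, passing to a subsequence, one obtains the weak$^\star$ limit $\mu$ and sets $K = \spt\,\mu$ (which is $\mathbf{F}$-dependent only through the constants). A standard comparison argument using the cup competitors gives a density lower bound: for $x \in K$ and a.e.\ small $r$, the construction \eqref{cup comp} produces a competitor whose energy over $B_{x,r}$ is bounded by $\Lambda \sigma_n r^n$ (the area of a piece of $\partial B_{x,r}$), while minimality forces $\mu(B_{x,r})$ to be controlled from below; combined with the upper bound $F \le \Lambda$ this yields $\H^n \res K \le \lambda^{-1}\mu$ and $\theta \ge \lambda$, and more quantitatively a uniform lower bound $c_0$ on the $n$-dimensional density of $\mu$.

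\textbf{Rectifiability via Caccioppoli sets.} The heart of part (i) is to show $K$ is $n$-rectifiable. The idea is: near $\H^n$-a.e.\ point where a suitable blow-up exists, the complement $B_{x,r}\setminus K$ locally has at least two ``large'' connected components (otherwise a cup competitor would strictly lower the energy, contradicting that $\{K_j\}$ is minimizing and $\mu$ is its limit — one uses the definition of good class to transfer the cup comparison to an actual competitor $J\in\P(H)$). The open sets $\Omega_i(K,x,r)$ from \eqref{e:Omega} are then sets of finite perimeter in $B_{x,r}$, with $\mathbf{F}$-a.e.\ of $K$ covered (up to $\H^n$-null sets) by the union of their reduced boundaries $\partial^* \Omega_i$: indeed the lower density bound forces $K \cap B_{x,r}$ to coincide $\H^n$-a.e.\ with the topological boundary of these Caccioppoli sets, which by De Giorgi's structure theorem is rectifiable. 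Thus $K$ is $n$-rectifiable, and $T_K(x)$ exists $\H^n$-a.e.; moreover $\mu = \theta\,\H^n\res K$ with $\theta$ Borel and $\theta \ge c_0$.

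\textbf{Semicontinuity under ellipticity.} For part (ii), having established that $K$ is rectifiable and that locally $\mu_j$ records (weighted) reduced boundaries of Caccioppoli sets converging, in an appropriate sense, to those associated with $K$, one invokes the classical lower semicontinuity of anisotropic surface energies with convex (elliptic) integrand under weak$^*$ convergence of the associated Gauss–Green measures (Reshetnyak-type semicontinuity, which is where convexity in $\pi$ enters, via \cite[5.1.2-5.1.5]{FedererBOOK}). The oscillation estimate \eqref{oscill} lets one freeze the $x$-dependence of $F$ on small balls with a controlled error, reducing to the frozen functional $\mathbf{F}^y$ where the semicontinuity is standard. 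Summing over a fine cover and letting the radii tend to zero, one gets $\liminf_j \FF(K_j) \ge \FF(K)$, i.e.\ $\theta(x) \ge F(x,T_K(x))$ $\H^n$-a.e. Finally, if $K \in \mathcal{P}(\mathbf{F},H)$, then $\FF(K) \le m_0 = \lim_j \FF(K_j) = \mu(\R^{n+1}\setminus H) = \int_K \theta \le \FF(K)$ forces equality throughout, so $K$ is a minimizer and $\theta = F(\cdot,T_K)$ $\H^n$-a.e.

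\textbf{Main obstacle.} The delicate step is the rectifiability argument: one must show that at $\H^n$-a.e.\ point of $K$ the complement genuinely disconnects into at least two substantial pieces, so that $K$ really is the boundary of a Caccioppoli set locally and not, say, a ``hair'' of positive $\H^n$-measure that no cup competitor can remove efficiently. This requires combining the density lower bound, a careful choice of good radii $r$ (the ``a.e.\ $r$'' in Definition~\ref{def good class} and in the slicing of $K$ by spheres), and the comparison inequality \eqref{inf good class} in a way that quantitatively controls $\mu(B_{x,r})$ against the perimeters of the $\Omega_i$; this is precisely where the present approach diverges from \cite{DelGhiMag} and bypasses the monotonicity formula.
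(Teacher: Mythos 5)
The first half of your plan (density bounds, using cup competitors and \eqref{cost per area}) matches the paper. Your semicontinuity sketch is also in the right spirit (the paper isolates the largest connected component $\tilde\Omega_j$ of $B_1\setminus\tilde K_j$, passes to a Caccioppoli limit, and invokes Federer's lower semicontinuity for elliptic integrands, after excluding the degenerate alternatives by a cup-competitor comparison). But the core of your rectifiability argument contains a genuine gap.

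You claim: \emph{``the lower density bound forces $K\cap B_{x,r}$ to coincide $\H^n$-a.e.\ with the topological boundary of these Caccioppoli sets, which by De Giorgi's structure theorem is rectifiable.''} This is not what De Giorgi's theorem says. The structure theorem gives rectifiability of the \emph{reduced} boundary $\partial^*\Omega_i$ (equivalently, of the essential boundary, which differs from $\partial^*\Omega_i$ by an $\H^n$-null set). The \emph{topological} boundary $\partial\Omega_i$ can be strictly larger — indeed, it can have positive Lebesgue measure, hence non-$\sigma$-finite $\H^n$-measure, and in general contains purely unrectifiable pieces. So identifying $K$ with $\bigcup_i\partial\Omega_i$ does not by itself yield rectifiability of $K$; you would need $K=\bigcup_i\partial^*\Omega_i$ up to $\H^n$-null sets, and that is exactly the nontrivial content, not something that follows from the density bound plus De Giorgi. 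Put differently, the scenario you must rule out is a piece of $K$ of positive $\H^n$-measure that lies in $\partial\Omega_i\setminus\partial^*\Omega_i$ for every $i$, and nothing in your sketch excludes it.

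The paper handles this differently, by contradiction: decompose $K=\mathcal R\cup\mathcal N$ into rectifiable and purely unrectifiable parts, suppose $\H^n(\mathcal N)>0$, and pick a point $x$ where $\mathcal R$ has density $0$. Then one uses that $\partial^*\Omega_i(\rho)\subset\mathcal R$ (since reduced boundaries are rectifiable) together with a slicing argument on spheres $\partial B_t$, the double-counting bound \eqref{slicingmisure}, and the spherical isoperimetric inequality to show that $\partial B_\rho\setminus K$ has a single dominant equivalence class $\Gamma_0(\rho)$ with $\H^n(\Gamma_0(\rho))\geq(\sigma_n-\varepsilon_0)\rho^n$. Step 2 transfers this to the approximating sets $K_j$, and Step 3 plugs the corresponding cup competitor into \eqref{inf good class} to contradict the density lower bound. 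None of this is an appeal to ``topological boundary is rectifiable''; the rectifiability of $\partial^*\Omega_i$ is used only to locate those reduced boundaries inside $\mathcal R$, and the contradiction is extracted from the quantitative isoperimetric/slicing estimate. Your proposal correctly identifies the $\Omega_i$ as the key objects, but the mechanism you supply for concluding rectifiability is false, and this is precisely the step the paper is careful about.
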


Indeed the measure $\mu$ above is an $n$-dimensional rectifiable varifold in the sense of \cite[Chapter 4]{SimonLN}. 
Since the proof of Theorem \ref{thm generale} does not exploit Preiss' rectifiability Theorem, when the Lagrangian is constant (i.e. up to a factor it is the area functional $F \equiv 1$) and we require the stronger energetic inequality in \eqref{inf good class} to hold for any cup competitors as in \eqref{cup comp} \cite[Equation 1.2]{DelGhiMag},  then the same strategy gives a simpler proof of the conclusions of \cite[Theorem 2]{DelGhiMag}, except for the monotonicity formula in \cite[Equation (1.5)]{DelGhiMag}.

One application  of Theorem \ref{thm generale} yields a generalization of the main result in \cite{HarrisonPugh14}
to anisotropic Lagrangians. More precisely, consider the following classes of sets.

\begin{definition}
Let $n\ge 2$ and $H$ be a closed set in $\R^{n+1}$. Let us consider the family
\[
\CC_H=\big\{\g:S^1\to\R^{n+1}\setminus H:\mbox{$\g$ is a smooth embedding of $S^1$ into $\R^{n+1}$}\big\}\,.
\]
We say that $\CC\subset\CC_H$ is closed by homotopy (with respect to $H$) if $\CC$ contains all elements
$\gamma'\in \CC_H$ belonging to the same homotopy class $[\g] \in\pi_1(\R^{n+1}\setminus H)$ of any $\gamma \in \CC$. Given $\CC\subset\CC_H$ closed by homotopy, we denote by $\mathcal{F} (H, \CC)$ the  family of relatively closed subsets $K$ of $\R^{n+1}\setminus H$ such that
\begin{eqnarray*}
\mbox{$K\cap\g\ne\emptyset$ for every $\g\in\CC$}\,.
\end{eqnarray*}
\end{definition}

\begin{theorem}\label{thm plateau}
Let $n\ge 2$ and $\CC$ be closed by homotopy with respect to $H$. Let also $\P(H)=\{K \in \F(H,\CC) \, : \, K \mbox{ is $n$- rectifiable}\}$. Then:
\begin{itemize}
\item[(a)]  $\F (H,\CC)$ is a good class in the sense of Definition \ref{def good class}  for any functional $\mathbf{F}$. 
\item[(b)]  If $\{K_j\}\subset \P (H)$ is a minimizing sequence and  $K$  is any set associated to $\{K_j\}$ by Theorem \ref{thm generale}, then $K\in\P (H)$ and thus $K$ is a minimizer.
\item[(c)]  The set $K$ in (b) is an $(\FF , 0, \infty)$-minimal set in $\R^{n+1}\setminus H$ in the sense of Almgren {\rm\cite{Almgren76}}.
\end{itemize}
\end{theorem}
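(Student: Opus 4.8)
We indicate the strategy for each of the three assertions.

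\emph{Part (a).} The plan is to prove the stronger fact that every cup competitor $L$ for $K\in\F(H,\CC)$ in a ball $B_{x,r}\cc\R^{n+1}\setminus H$ (with $x\in K$, $0<r<\dist(x,H)$) already belongs to $\P(H)$. Since $L=(K\setminus B_{x,r})\cup((\partial B_{x,r})\setminus\Gamma_i(K,x,r))$ is relatively closed, $n$-rectifiable and satisfies $L\setminus\overline{B_{x,r}}=K\setminus\overline{B_{x,r}}$, this would give \eqref{inf good class} by choosing $J=L$, for \emph{every} admissible $r$ and for an arbitrary $\FF$. So it remains to check $L\cap\gamma\ne\emptyset$ for all $\gamma\in\CC$. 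Assume $\gamma\cap L=\emptyset$ for some $\gamma\in\CC$. A small perturbation inside the homotopy class $[\gamma]$ (which keeps $\gamma$ off the closed set $L$) lets us assume that $\gamma$ meets $\partial B_{x,r}$ transversally, so $\gamma\cap\overline{B_{x,r}}$ is a finite union of arcs with endpoints on $\partial B_{x,r}$; since $\gamma$ avoids $L\cap\partial B_{x,r}=\partial B_{x,r}\setminus\Gamma_i(K,x,r)$, each endpoint lies in $\Gamma_i(K,x,r)$, and since $\gamma$ avoids $K\setminus B_{x,r}$ we have $\gamma\cap K\subset B_{x,r}$. The set $\widehat\Omega_i:=\Omega_i(K,x,r)\cup\Gamma_i(K,x,r)$ is a single $\sim_{K,x,r}$-equivalence class, hence path-connected, and it is disjoint from $K$; replacing each of the above arcs by a path in $\widehat\Omega_i$ with the same endpoints and using that $\overline{B_{x,r}}$ is simply connected (so the replacement is a homotopy rel endpoints inside $\overline{B_{x,r}}\subset\R^{n+1}\setminus H$), we obtain $\gamma'$ homotopic to $\gamma$ in $\R^{n+1}\setminus H$ with $\gamma'\cap K=\emptyset$. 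Then $\gamma'\in\CC$ contradicts $K\in\F(H,\CC)$, and (a) follows.

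\emph{Part (b).} By Theorem~\ref{thm generale} the set $K=\spt\mu$ is $n$-rectifiable, so we must only verify $K\in\F(H,\CC)$. Here we use the uniform lower density estimate for minimizing sequences in a good class — the anisotropic analogue of the one in \cite{DelGhiMag}, which is among the ingredients established in the proof of Theorem~\ref{thm generale} —: there are $c_1,r_1>0$ with $\mu_j(B_{p,\rho})\ge c_1\rho^n$ for every $j$, every $p\in K_j$ and every $\rho<\min\{r_1,\dist(p,H)\}$. Fix $\gamma\in\CC$; since $K_j\in\F(H,\CC)$ there is $q_j\in K_j\cap\gamma$, and up to a subsequence $q_j\to q\in\gamma\subset\R^{n+1}\setminus H$. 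If $q\notin\spt\mu$, choose $\rho$ with $\overline{B_{q,\rho}}\cap K=\emptyset$ and $\rho<\min\{r_1,\dist(q,H)\}$: weak$^\star$ convergence gives $\limsup_j\mu_j(\overline{B_{q,\rho/2}})\le\mu(\overline{B_{q,\rho/2}})=0$, whereas for large $j$ one has $B_{q_j,\rho/4}\subset B_{q,\rho/2}$ and hence $\mu_j(B_{q,\rho/2})\ge c_1(\rho/4)^n>0$, a contradiction. Thus $q\in\spt\mu\cap\gamma$, so $K\in\F(H,\CC)$ and $K\in\P(H)$; by the last assertion of Theorem~\ref{thm generale}, $K$ is a minimizer for \eqref{plateau problem generale}.

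\emph{Part (c).} Let $\phi$ be a Lipschitz deformation in the sense of \cite{Almgren76}: $\{\phi\ne\Id\}$ is contained in a ball $B$ with $\overline B\cc\R^{n+1}\setminus H$, and $\phi$ is homotopic to the identity through maps $f_t$ supported in $B$. Since each $f_t$ is the identity outside $B$ and maps $B$ into $\overline B\subset\R^{n+1}\setminus H$, it maps $\R^{n+1}\setminus H$ into itself; in particular $t\mapsto f_t\circ\gamma$ is a homotopy in $\R^{n+1}\setminus H$ for every $\gamma\in\CC$, and $\phi(K)$ is relatively closed and $n$-rectifiable. The crux is to prove $\phi(K)\in\F(H,\CC)$: if some $\gamma\in\CC$ had $\gamma\cap\phi(K)=\emptyset$, then — using that $\phi|_{\overline B}$ is a degree-one self-map of $\overline B$ fixing $\partial B$, hence surjective, and that $\phi$ is homotopic to $\Id$ relative to $\R^{n+1}\setminus B$ — one lifts through $\phi$ the arcs of $\gamma$ lying in $B$ and glues them to $\gamma\setminus B$ (which already avoids $K$, as $\phi(K)\supseteq K\setminus B$), producing $\gamma'$ homotopic to $\gamma$ in $\R^{n+1}\setminus H$ with $\gamma'\cap K=\emptyset$, contradicting $K\in\P(H)$ from part (b); this topological step is the analogue for the present setting of the corresponding lemma in \cite{DelGhiMag}. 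Granting $\phi(K)\in\P(H)$, we obtain $\FF(\phi(K))\ge m_0=\FF(K)$; since $\phi=\Id$ outside $B$, the sets $\phi(K)$ and $K$ agree outside a ball whose boundary is $\H^n$-negligible for both, so the locality of $\FF$ gives $\FF(K\cap\xi)\le\FF(\phi(K\cap\xi))$ for $\xi=\{\phi\ne\Id\}$, which is precisely $(\FF,0,\infty)$-minimality of $K$ in $\R^{n+1}\setminus H$.

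I expect the genuine difficulty to be concentrated in the topological lemma of part~(c) — that Lipschitz deformations preserve the homotopic spanning condition — and, to a lesser extent, in importing from the proof of Theorem~\ref{thm generale} the uniform lower density bound used in part~(b); the only new point relative to \cite{DelGhiMag} is the homotopy construction in part~(a), tailored to the modified notion of cup competitor of Remark~\ref{r:compare_cup}.
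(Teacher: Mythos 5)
Your proof is correct and follows essentially the same route as the paper's. The only presentational differences are that in part (a) you inline the content of Lemma~\ref{lemma curve intersezione} (replacing the arcs of $\gamma$ in $\overline{B_{x,r}}$ by paths inside the single accessible component $\Omega_i\cup\Gamma_i$ and then smoothing within the homotopy class) rather than invoking it as a separate lemma and comparing against the cup competitor, while in part (b) you spell out the density-bound argument that the paper simply defers to \cite{DelGhiMag}; one small caveat is that the uniform lower bound on $\mu_j(B_{p,\rho})$ invoked in (b) actually carries an $o_j(1)$ error term coming from the almost-minimality of $K_j$, which however does not disturb the contradiction for $j$ large, and in (c) the ``lifting'' description is only a loose paraphrase of the degree-theoretic step that both you and the paper ultimately outsource to \cite[Theorem 4, Step 3]{DelGhiMag}.
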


As already mentioned, a similar theorem has been obtained independently by Harrison and Pugh in \cite{HarrisonPugh16},
building upon a previous paper, \cite{HarrisonPugh15}, where the same authors considered the special case of isotropic Lagrangians 
$F (x, \pi) = f(x)$. 

Finally, we remark that it is possible to obtain the useful additional information $\theta (x) = F (x, T_K (x))$ in Theorem \ref{thm generale} even when we cannot directly infer that $K= \spt\, \mu$ belongs to the class $\mathcal{P} (\mathbf{F}, H)$, provided we allow a larger class of competitors.  We recall here the ones introduced in \cite{DePDeRGhi}.
\begin{definition}[Lipschitz deformations]
Let $\mathfrak D(x,r)$ be the set of functions $\vphi:\R^{n+1} \rightarrow \R^{n+1}$ for which there exists a $C^1$ isotopy $\lambda:[0,1]\times \R^{n+1}\rightarrow\R^{n+1}$ such that 
$$
\lambda(0,\cdot) = \Id, \quad \lambda(1,\cdot)=\vphi, \quad \lambda(t,h)=h \quad\forall\,(t,h)\in [0,1]\times (\R^{n+1} \setminus B_{x,r}) .
$$
We finally set $\D(x,r):=\overline{\mathfrak D(x,r)}^{C^0}\cap \Lip (\mathbb R^{n+1})$, the sequential closure of $\mathfrak D(x,r)$ with respect to the uniform convergence, intersected with the space of Lipschitz maps.
\end{definition}

\begin{definition}[Deformed competitors and deformation class]\label{def deformed class}
Let  $K\subset \mathbb R^{n+1}\setminus H$ be relatively closed and  $B_{x,r}\subset\subset \mathbb R^{n+1}\setminus H$.
A  {\em deformed competitor} for $K$ in $B_{x,r}$ is any set of the form
\begin{equation*}
 \varphi \left (  K \right ) \quad \mbox{ where } \quad \varphi \in \D(x,r).
\end{equation*}
A family $\mathcal{P} (\mathbf{F}, H)$ of relatively closed \(n\)-rectifiable subsets $K\subset \mathbb R^{n+1}\setminus H$ 
is called a {\em deformation class} if for every $K\in\mathcal{P} (\mathbf{F}, H)$, for every $x\in K$ 
and for a.e. $r\in (0, \dist (x, H))$
\begin{equation}\label{inf def class}
  \inf \big\{ \FF (J) : J\in \mathcal{P} (H)\,,J\setminus \overline{B_{x,r}} =K\setminus \overline
{B_{x,r}} \big\} \leq \FF (L)\,
\end{equation}
whenever $L$ is any deformed competitor for $K$ in $B_{x,r}$.
\end{definition}

\begin{proposition}\label{p:aggiuntiva}
Assume that $\mathbf{F}$ is elliptic and that $H$, $\mathcal{P} (\mathbf{F}, H)$, $\{K_j\}$, $\mu$ and $K$ are as in Theorem \ref{thm generale}. If in addition $\mathcal{P} (\mathbf{F}, H)$ is a deformation class, then $\theta (x) = F (x, T_K (x))$ for $\mathcal{H}^n$-a.e. $x\in K$. 
\end{proposition}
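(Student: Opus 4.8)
By Theorem~\ref{thm generale} and the ellipticity of $\mathbf F$ we already know $\theta(x)\ge F(x,T_K(x))$ for $\H^n$-a.e.\ $x\in K$, so it suffices to prove the reverse inequality. Suppose it fails; writing $\{x\in K:\theta(x)>F(x,T_K(x))\}$ as the countable union of the sets $\{\theta>F(\cdot,T_K)+1/m\}$, we find $\eta>0$ and a set $S\subset K$ with $\H^n(S)>0$ on which $\theta>F(\cdot,T_K)+2\eta$. Pick a point $x_0\in S$ which is simultaneously an $\H^n\res K$-density point of $S$, a Lebesgue point of $\theta$ with respect to $\H^n\res K$, and a point where the approximate tangent $\pi:=T_K(x_0)$ exists together with the usual multiplicity-one blow-up of $\H^n\res K$; $\H^n$-a.e.\ point of $S$ has these properties. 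Consequently the rescaled measures $\mu_{x_0,r}:=r^{-n}\big(y\mapsto\tfrac{y-x_0}{r}\big)_\#\mu$ converge weakly$^\star$ to $\theta(x_0)\,\H^n\res\pi$ as $r\downarrow0$, while $\theta(x_0)>F(x_0,\pi)+2\eta$; after a rigid motion we take $x_0=0$ and $\pi=\R^n\times\{0\}$. The strategy is to replace, at small scales, each $K_j$ inside a ball centred near $0$ by a \emph{flat} competitor whose $\mathbf F$-energy in that ball is essentially $F(0,\pi)\,\omega_n r^n$; since this is below $\mu(B_{0,r})\approx\theta(0)\,\omega_n r^n$, minimality is violated.

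We argue by a diagonal procedure on the two limits $\mu_j\weak\mu$ (as $j\to\infty$) and $\mu_{0,r}\weak\theta(0)\,\H^n\res\pi$ (as $r\downarrow0$). Fix $\delta\in(0,1)$. Choosing first $r_k\downarrow0$, then $j_k\to\infty$, then $\e_k$ with $\e_k/r_k\downarrow0$ slowly enough (this is just bookkeeping, using that $\theta(0)\,\H^n\res\pi$ is a fixed point of the rescaling operation and that only countably many radii are ``bad''), we arrange all of the following: $\mathbf F(K_{j_k})-m_0=o(r_k^n)$; the rescalings $\mu_{j_k,r_k}:=r_k^{-n}(y\mapsto y/r_k)_\#\mu_{j_k}$ converge weakly$^\star$ on $\overline{B_{0,2}}$ to $\theta(0)\,\H^n\res\pi$ (so $\mu_{j_k}(B_{0,sr_k})=\theta(0)\,\omega_n s^n r_k^n(1+o(1))$ for every $s\in(0,2)$); $\H^n\big(K_{j_k}\cap B_{0,2r_k}\setminus U_{\e_k}(\pi)\big)=o(r_k^n)$ (using $\mu_{j_k}\ge\lambda\,\H^n\res K_{j_k}$ and that $\theta(0)\,\H^n\res\pi$ charges no set away from $\pi$); there is $z_k\in K_{j_k}$ with $|z_k|=o(r_k)$; and $r_k$ is one of the (a.e.) radii for which the deformation-class inequality holds for $K_{j_k}$ at $z_k$, with $B_{z_k,r_k}\cc\R^{n+1}\setminus H$ for $k$ large and $\H^n(K_{j_k}\cap\partial B_{z_k,r_k})=0$.

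The core is the construction of the competitor. Use coordinates $z=(z',z_{n+1})$ centred at $z_k$ (so $\pi=\{z_{n+1}=0\}$ as a direction). Since $\dist(z_k,\pi)\le|z_k|=o(r_k)$, the ``sheet'' $\Sigma_k:=K_{j_k}\cap\overline{B_{z_k,r_k}}\cap U_{\e_k}(\pi)$ lies in $\{|z_{n+1}|\le\tau_k\}$ with $\tau_k:=\e_k+|z_k|=o(r_k)$, while the ``junk'' $J_k:=K_{j_k}\cap\overline{B_{z_k,r_k}}\setminus U_{\e_k}(\pi)$ has $\H^n(J_k)=o(r_k^n)$. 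Let $\chi:[0,\infty)\to[0,1]$ be non-decreasing, Lipschitz, $\equiv0$ on $[0,(1-\delta)r_k]$ and $\equiv1$ on $[r_k,\infty)$, and let $\varphi_k(z):=(z',\chi(|z|)\,z_{n+1})$. For $t\in(0,1]$ the map $z\mapsto(z',(t+(1-t)\chi(|z|))\,z_{n+1})$ is a diffeomorphism of $\R^{n+1}$ equal to the identity outside $B_{z_k,r_k}$ (its vertical component is strictly increasing by monotonicity of $\chi$), and these maps form a $C^1$ isotopy joining the identity ($t=1$) to an admissible deformation; letting $t\downarrow0$ gives $\varphi_k\in\D(z_k,r_k)$. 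Moreover $\varphi_k(\overline{B_{z_k,r_k}})\subset\overline{B_{z_k,r_k}}$, so $L_k:=\varphi_k(K_{j_k})$ is a deformed competitor for $K_{j_k}$ in $B_{z_k,r_k}$ with $L_k\setminus\overline{B_{z_k,r_k}}=K_{j_k}\setminus\overline{B_{z_k,r_k}}$. We estimate $\mathbf F(L_k,\overline{B_{z_k,r_k}})$ by splitting $K_{j_k}\cap\overline{B_{z_k,r_k}}$ into $J_k$ and $\Sigma_k$. On $\Sigma_k\cap\{|z|\le(1-\delta)r_k\}$ the map $\varphi_k$ is the orthogonal projection onto $\{z_{n+1}=0\}$, so its image has $\H^n$-a.e.\ tangent plane $\pi$, lies in $B_{0,2r_k}$, and has $\H^n$-measure $\le\omega_n r_k^n$; by continuity of $F$ its $\mathbf F$-energy is $\le(F(0,\pi)+o(1))\,\omega_n r_k^n$. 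On $\Sigma_k\cap\{(1-\delta)r_k<|z|<r_k\}$ one has $D\varphi_k=\mathrm{diag}(I_n,\chi)+R$ with $\|R\|\lesssim|z_{n+1}|\,\|\chi'\|_\infty\lesssim\tau_k/(\delta r_k)=o(1)$ and $|z_{n+1}|\le\tau_k$ there, hence $\|D\varphi_k\|\le1+o(1)$; thus the image has $\H^n$-measure $\le(1+o(1))\,\lambda^{-1}\mu_{j_k}\big(B_{z_k,r_k}\setminus B_{z_k,(1-\delta)r_k}\big)\le C(n,\lambda,\Lambda)\,\theta(0)\,(1-(1-\delta)^n)\,\omega_n r_k^n(1+o(1))$, and so does its energy (using $F\le\Lambda$). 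Finally $\|D\varphi_k\|\lesssim1/\delta$ everywhere in $B_{z_k,r_k}$, so $\varphi_k(J_k)$ has $\H^n$-measure, and energy, $\le\delta^{-n}\Lambda\,o(r_k^n)=o(r_k^n)$. Summing, $\mathbf F(L_k,\overline{B_{z_k,r_k}})\le F(0,\pi)\,\omega_n r_k^n+C\,\theta(0)\,(1-(1-\delta)^n)\,\omega_n r_k^n+o(r_k^n)$.

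Since $\mathbf F(J)\ge m_0$ for all $J\in\mathcal P(H)$, the deformation-class inequality for $K_{j_k}$ at $z_k$ gives $m_0\le\mathbf F(L_k)=\mathbf F(K_{j_k})-\mathbf F(K_{j_k},\overline{B_{z_k,r_k}})+\mathbf F(L_k,\overline{B_{z_k,r_k}})$ (as $\varphi_k=\mathrm{id}$ outside $B_{z_k,r_k}$ and $\H^n(K_{j_k}\cap\partial B_{z_k,r_k})=0$), that is $\mu_{j_k}(B_{z_k,r_k})\le(\mathbf F(K_{j_k})-m_0)+\mathbf F(L_k,\overline{B_{z_k,r_k}})$. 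The left-hand side is $\ge\mu_{j_k}(B_{0,r_k-|z_k|})=\theta(0)\,\omega_n r_k^n(1+o(1))$; using the two displayed estimates, dividing by $\omega_n r_k^n$ and letting $k\to\infty$ yields $\theta(0)\le F(0,\pi)+C\,\theta(0)\,(1-(1-\delta)^n)$, whence $\theta(0)\le F(0,\pi)$ upon letting $\delta\downarrow0$ — contradicting $\theta(0)>F(0,\pi)+2\eta$. The main obstacle is the simultaneous construction of $\varphi_k$: it must be an admissible Lipschitz deformation (a uniform limit of diffeomorphisms fixing the exterior of the ball), it must collapse the $o(r_k)$-neighbourhood-of-$\pi$ part of $K_{j_k}$ onto a flat $n$-disk of area $\le\omega_n r_k^n$, and it must do so with Lipschitz constant $1+o(1)$ on that part. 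This last point fails for a naive radial interpolation and is recovered here precisely because the large factor $\chi'(|z|)\sim1/(\delta r_k)$ multiplies only the vertical coordinate $z_{n+1}$, which on the sheet is $o(r_k)$. The remaining pieces — the two-scale diagonal extraction and the verification that the squashing maps are isotopic to the identity — are routine.
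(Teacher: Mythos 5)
Your proof is correct and follows the same core idea as the paper's (squash the nearly-flat part of $K_j$ onto the tangent plane at a density point, compare energies via the deformation-class inequality, and derive a contradiction when $\theta > F$), but it differs in several concrete respects. First, you build the squashing map $\varphi_k(z)=(z',\chi(|z|)z_{n+1})$ explicitly and verify that $\varphi_k\in\D(z_k,r_k)$ via the linear homotopy $\psi_t(z)=(z',(t+(1-t)\chi(|z|))z_{n+1})$; the paper instead cites a specific map $P$ from \cite[Eq.~3.14]{DePDeRGhi} satisfying $\|P-\Id\|_\infty+\Lip(P-\Id)\le C\sqrt\e$, so its competitor has small global Lipschitz deviation from the identity, while yours only has Lipschitz constant $1+o(1)$ on the thin slab and $O(1/\delta)$ away from it (you compensate by showing the off-slab part has $\H^n$-measure $o(r_k^n)$). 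Second, you work with balls centred at $z_k\in K_{j_k}$ and take a sequence $r_k\downarrow 0$ with fixed relative annulus width $\delta$ and slab half-height $\tau_k=o(r_k)$, whereas the paper works with cubes $Q_r$ and rectangles $R_{r,\e r}$ at a scale $r=r(\e)$ which is eventually \emph{fixed} (via Lemma~\ref{l:lista}), letting $j\to\infty$ first and then $\e\downarrow 0$; the $\e$-small slab and $\sqrt\e$-wide annulus are absorbed into the Lipschitz bound on $P$. Third, the paper's final contradiction is arithmetic ($\sigma/2\le\sigma/4$ after dividing by $r^n$), while yours is slightly cleaner, producing directly $\theta(0)\le F(0,\pi)$ as $\delta\downarrow 0$. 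A minor nit: to keep the isotopy $\psi_t$ genuinely $C^1$ you should take $\chi$ smooth rather than merely Lipschitz (the radial function $\chi(|\cdot|)$ is then smooth because $\chi$ vanishes near $0$); this is cosmetic. Overall your argument is self-contained where the paper defers to \cite{DePDeRGhi}, at the cost of a somewhat longer diagonal extraction.
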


\section{Proof of Theorem \ref{thm generale}}\label{s3}

Parts of the proofs follow the isotropic case treated in \cite{DelGhiMag}: we will be brief on these arguments, hoping to convey the main ideas and in order to leave space to the original content.
\medskip

The proof of Theorem \ref{thm generale} goes as follows: we consider the natural measures
$(\mu_j)$ associated to a minimizing sequence $(K_j)$ and extract a weak limit $\mu$. We first recall that, as a consequence of minimality, $\mu$ enjoys density upper and lower 
 bounds on $\spt(\mu)$, leading to the representation $\mu = \theta\H^n\res\spt(\mu)$: this part follows almost verbatim the proof of \cite{DelGhiMag}. 
Then, via an energy comparison argument, we exclude the presence of purely unrectifiable subsets of $\spt(\mu)$, which is
the core novelty of the note. 
We then show that, if the Lagrangian is elliptic,  then the energy is lower semicontinuous along $(K_j)$. Finally, if we assume also that $\mathcal{P} (\mathbf{F}, H)$ is a deformation class, we show that $\theta (x) = F(x, T_K (x))$.

\subsection{Density bound} In this section we prove the following

\begin{lemma}[Density bounds]\label{l:density}
Suppose that $\P(H)$ is a good class, that $\{K_j\}\subset\P(\mathbf{F}, H)$ is a minimizing sequence for problem \eqref{plateau problem generale} and that
$$
\mu_j = F(\cdot, T_K(\cdot))\H^n\res K_j \weak \mu
$$
in $\R^{n+1}\setminus H$. Then the limit measure $\mu$ enjoys density upper and lower bounds: 
\begin{equation}
  \label{lower density estimate mu}
  \theta_0\,\omega_n r^n\le \mu(B_{x,r}) \le \theta_0^{-1}\,\omega_n r^n\,,\qquad\forall x\in\spt\,\mu\,,\,\forall r<d_x :=\dist(x,H) \, 
\end{equation}
for some positive constant $\theta_0=\theta_0(n, F)>0$. 
\end{lemma}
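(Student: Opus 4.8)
The plan is to compare the almost-minimizers $K_j$ with the competitors made available by the good class property in small balls, extract the corresponding bounds on the energy densities $\mu_j = F(\cdot,T_{K_j})\H^n\res K_j$, and then pass to the weak$^*$ limit; the argument is essentially that of \cite{DelGhiMag}. Write $\e_j := \FF(K_j)-m_0\to0$. For every $z$ and a.e.\ $s<\dist(z,H)$ the slice $K_j\cap\partial B_{z,s}$ is $(n-1)$-rectifiable with finite $\H^{n-1}$-measure and $\H^n(K_j\cap\partial B_{z,s})=0$, so that $\FF(K_j)=\FF(K_j,\overline{B_{z,s}})+\FF(K_j,\R^{n+1}\setminus\overline{B_{z,s}})$; moreover, since every $J\in\P(H)$ has $\FF(J)\ge m_0$, the defining inequality of a good class yields $m_0\le\FF(L)$ for every cup competitor $L$ for $K_j$ in $B_{z,s}$. \emph{Upper bound.} Fixing $z\in K_j$ and a good radius $s$, I compare $K_j$ with a cup competitor that caps $B_{z,s}$ with a piece of $\partial B_{z,s}$ (or simply deletes $K_j\cap\overline{B_{z,s}}$, when $B_{z,s}\setminus K_j$ has a single equivalence class); any such $L$ satisfies $\FF(L)\le\FF(K_j,\R^{n+1}\setminus\overline{B_{z,s}})+\Lambda\sigma_n s^n$, so the two facts above give $\mu_j(B_{z,s})=\FF(K_j,\overline{B_{z,s}})\le\Lambda\sigma_n s^n+\e_j$. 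For a general $x\in\spt\mu$ I choose, along a subsequence, $z_j\in K_j$ with $z_j\to x$ (possible because $\mu(B_{x,\delta})\le\liminf_j\mu_j(B_{x,\delta})$ forces infinitely many $K_j$ to meet each $B_{x,\delta}$ in a set of positive $\H^n$-measure), apply the bound at $z_j$ and a good radius slightly larger than $s+|z_j-x|$, and let $j\to\infty$ and then $s\uparrow r$, obtaining the upper bound with $\theta_0^{-1}\omega_n=\Lambda\sigma_n$.

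The \emph{lower bound} is the core of the statement. Fix $x\in\spt\mu$ and pick, along a subsequence, $y_j\in\spt(\H^n\res K_j)$ with $y_j\to x$; set $v_j(t):=\H^n(K_j\cap B_{y_j,t})$. I claim there are $c_1=c_1(n,\lambda,\Lambda)>0$ and radii $T_j\to0$ with $v_j(t)^{1/n}\ge\frac{c_1}{n}(t-T_j)_+$ for $t<\dist(y_j,H)$; granting this, $\mu(B_{x,r})\ge\mu(\overline{B_{x,t}})\ge\limsup_j\mu_j(B_{y_j,t})\ge\lambda\,\limsup_j v_j(t)\ge\lambda(c_1/n)^n t^n$ for all $t<r$ (using $y_j\to x$ and upper semicontinuity of $\mu$ on compacts), and $t\uparrow r$ concludes. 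The engine of the claim is a dichotomy for a.e.\ $t$ with $v_j(t)>\e_j/\lambda$. First, $B_{y_j,t}\setminus K_j$ must have at least two equivalence classes: otherwise the cup competitor that deletes $K_j$ inside the ball forces $\FF(K_j,\overline{B_{y_j,t}})\le\e_j$, hence $v_j(t)\le\e_j/\lambda$. Next, each equivalence class is relatively open in $\partial B_{y_j,t}$ with topological boundary contained in $\Sigma_t:=K_j\cap\partial B_{y_j,t}$ (points off the closed set $K_j$ that are close on the sphere are joined by short arcs pushed slightly into the ball). Hence the scale-invariant relative isoperimetric inequality on the sphere applies: either the largest class $\Gamma_{i_0}$ has $\H^n(\partial B_{y_j,t}\setminus\Gamma_{i_0})\le C_n\H^{n-1}(\Sigma_t)^{n/(n-1)}$ — in which case the cup competitor built on $\Gamma_{i_0}$ gives $\FF(K_j,\overline{B_{y_j,t}})\le\Lambda C_n\H^{n-1}(\Sigma_t)^{n/(n-1)}+\e_j$, so $\lambda v_j(t)\le\Lambda C_n\H^{n-1}(\Sigma_t)^{n/(n-1)}+\e_j$ — or, grouping the classes in decreasing order of size, some union of classes and its complement both have $\H^n$-measure $\ge\frac14\sigma_n t^n$, which forces $\H^{n-1}(\Sigma_t)\ge c_\ast t^{n-1}$. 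Using the upper bound $v_j(t)\le Ct^n$ (valid once $\e_j\le\Lambda\sigma_n t^n$, which is automatic when $v_j(t)\ge2\e_j/\lambda$), both alternatives yield $\H^{n-1}(\Sigma_t)\ge c_1 v_j(t)^{(n-1)/n}$ on the interval $\{v_j\ge2\e_j/\lambda\}=(T_j,\dist(y_j,H))$.

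To finish, the coarea inequality $\H^n(K_j\cap(B_{y_j,t_2}\setminus B_{y_j,t_1}))\ge\int_{t_1}^{t_2}\H^{n-1}(K_j\cap\partial B_{y_j,\rho})\,d\rho$ turns the previous estimate into $v_j(t_2)\ge v_j(t_1)+c_1\int_{t_1}^{t_2}v_j(\rho)^{(n-1)/n}\,d\rho$ for $T_j<t_1<t_2<\dist(y_j,H)$; since $v_j$ is monotone this gives $v_j'\ge c_1 v_j^{(n-1)/n}$ a.e., hence $(v_j^{1/n})'\ge c_1/n$, and integrating from $T_j$ yields $v_j(t)^{1/n}\ge\frac{c_1}{n}(t-T_j)$. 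Finally $T_j\to0$: otherwise $v_j(\delta/2)<2\e_j/\lambda\to0$ along a subsequence for some fixed $\delta>0$, so $\mu_j(B_{y_j,\delta/2})\to0$, and since $y_j\to x$ this forces $\mu(B_{x,\delta/4})=0$, contradicting $x\in\spt\mu$.

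I expect the \emph{main obstacle} to be the lower bound: one must carry out the isoperimetric dichotomy on the sphere cleanly — in particular handling the case of many small equivalence classes via the grouping argument — and then absorb the error terms $\e_j$ into a genuine differential inequality for $v_j$ and transfer it to the limit measure, keeping careful track of the subsequences needed so that $y_j\to x$. By contrast the upper density bound is essentially immediate from the definition of a good class.
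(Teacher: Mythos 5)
Your proof is correct and follows essentially the same route as the paper, which outsources the lower density bound to the differential-inequality/isoperimetric-dichotomy argument of \cite[Theorem 2, Step 1]{DelGhiMag} (cup competitors only, comparability of $\FF$ with $\H^n$ via \eqref{cost per area}) and obtains the upper bound by direct comparison with a cup competitor lying on $\partial B_{x,r}$. Your extra step of approximating $x\in\spt\,\mu$ by points $z_j\in K_j$ so that the good-class inequality literally applies (the definition requires the centre to lie in the competitor) addresses a technicality the paper glosses over, but the underlying argument is identical.
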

\begin{proof}
The density lower bound can be proved as in \cite[Theorem 2, Step 1]{DelGhiMag} with the use of cup competitors only, since the energy $\FF$ is comparable to the Hausdorff measure by \eqref{cost per area}. The notion of cup competitor in Definition \ref{def good class} slightly differs from the notion in \cite[Definition 1]{DelGhiMag}, however the key fact is that the latter have larger energy, cf. Remark \ref{r:compare_cup}. 
The existence of a density upper bound is trivially true, since we can use a generic sequence $\{\Gamma_j\}$ of cup competitors associated to $\{K_j\}$ in $B_{x,r}$. Observe that at least one $\Gamma_j$ exists as long as $\partial B_{x,r}\setminus K_j\neq \emptyset$: on the other hand for a.e. radius $r$ we have $\liminf_j \mathcal{H}^{n-1} (K_j)< \infty$ and we can assume the existence of a subsequence for which the $\Gamma_j$ exist.  Hence, by almost minimality
\begin{equation} \label{s}
\mu (B_{x,r})\le\limsup_j \mu_j (\overline{B_{x,r}}) \leq \limsup_j \FF (\pa B_{x,r}\setminus \Gamma_j) \leq  \limsup_j \Lambda \, \H^n(\pa B_{x,r}\setminus \Gamma_j) \leq \Lambda \, \sigma_n \, r^n\,.
\end{equation}
\end{proof}
We remark that, if the requirement of being a good class were substituted by that of being a deformation class, the density lower bound could be proven as in \cite[Theorem 1.3, Step 1]{DePDeRGhi}: note that although the bound in \cite[Theorem 1.3, Step 1]{DePDeRGhi} is claimed for the area functional, the argument requires only the two-sided comparison of \eqref{cost per area}. Moreover, the upper bound could be obtained as in \eqref{s}, but using the slightly different cup competitors defined in \cite[Definition 1]{DelGhiMag}, which are proven to be deformed competitors in \cite[Theorem 7, Step 1]{DelGhiMag}.

\subsection{Proof Theorem \ref{thm generale}: rectifiability}
Up to extracting subsequences, we can assume the existence of a Radon measure $\mu$ on $\R^{n+1}\setminus H$ such that
\begin{equation}\label{muj va a mu}
  \mu_j\weak\mu\,,\qquad\mbox{as Radon measures on $\R^{n+1}\setminus H$}\,.
\end{equation}
We set $K = \spt\, \mu \setminus H$ and from the differentiation theorem for Radon measures, see for instance \cite[Theorem 6.9]{mattila}, and Lemma \ref{l:density} we deduce
\begin{equation}\label{3}
\mu = \theta \H^n \res K, 
\end{equation}
is a relatively closed set in $\R^{n+1}\setminus H$ and $\theta: K \to \mathbb R^+$ a Borel function with $c_0\leq \theta\leq C_0$. 

We decompose $K = \mathcal R \cup \mathcal N$ into a rectifiable $\mathcal R$ and a purely unrectifiable $\mathcal N$ (see \cite[Chapter 3, Section 13.1]{SimonLN}) and assume by contradiction that $\H^n(\mathcal N)>0$.
Then, there is $x \in K$ such that
\begin{equation}
\label{nulldens}
  \Theta^{n}(\mathcal R,x)=\lim_{r\rightarrow 0}\frac{\H^n(\mathcal R\cap B_{x,r})}{\omega_n r^n} =0,\qquad   \Theta^{*n}(\mathcal N,x)=a>0. 
\end{equation}
Without loss of generality, we assume that $x=0$. The overall aim is to show that at $0$ the density lower bound of Lemma \ref{l:density} would be false, reaching therefore a contradiction. 

For every $\rho>0$, we let $\Omega_i(\rho)$, with $i \in \N$, be sets of \eqref{e:Omega} (where we omit the dependence on $K$ and $x$).
Observe that the $\Omega_i(\rho)$ are sets of finite perimeter (see for instance \cite[4.5.11]{FedererBOOK}). If we denote, as usual,
by $\partial^* \Omega_i (\rho)$ their reduced boundaries (in $B_{x, \rho}$), we know that $\partial^* \Omega_i (\rho) \subset K$.
Moreover:
\begin{itemize}
\item[(a)] by the rectifiability of the reduced boundary (cf. \cite[4.5.6]{FedererBOOK}), $\partial^* \Omega_i (\rho) \subset \mathcal{R}$;
\item[(b)] each point $x\in \partial^* \Omega_i (\rho)$ belongs to at most another distinct $\partial^* \Omega_j (\rho)$, because at any point $y\in \partial^*\Omega$ of a Caccioppoli set $\Omega$ its blow-up is a half-space, cf. \cite[4.5.5]{FedererBOOK}.
\end{itemize}

Since in what follows we will often deal with subsets of the sphere $\partial B_\rho$, we will use the following notation:
\begin{itemize}
\item $\partial_{\partial B_\rho} A$ is the topological boundary of $A$ as subset of $\partial B_\rho$;
\item $\partial^*_{\partial B_\rho} A$ is the reduced boundary of $A$ relative to $\partial B_\rho$.
\end{itemize}

Using the slicing theory for sets of finite perimeter we can infer that 
\begin{equation}\label{e:(c)}
\mathcal{H}^{n-1} (\partial_{\pa B_t}^*(\Omega_i(\rho)\cap \pa B_t )\setminus ((\partial^* \Omega_i (\rho))\cap \partial B_t)) =0
\qquad \mbox{for a.e. $t< \rho$.}
\end{equation}
This can be for instance proved identifying $\Omega_i (\rho)$ and $\partial^* \Omega_i (\rho)$ with the corresponding integer rectifiable
currents (see \cite[Remark 27.7]{SimonLN}) and then using the slicing theory for integer rectifiable currents (cf. \cite[Chapter 6, Section 28]{SimonLN}).
Combining (a), (b) and \eqref{e:(c)} above we eventually achieve
\begin{equation}\label{slicingmisure}
 \sum_i\H^{n-1}\res\partial_{\pa B_t}^*(\Omega_i(\rho)\cap \pa B_t ) \leq 2\H^{n-1}\res (\mathcal R\cap \pa B_t)\quad
 \mbox{for a.e. }t<\rho.
\end{equation}

\medskip

\textit{Step 1.} In this first step we show that, for every $\e_0>0$ and every $r_0>0$ small enough, there exists $\rho\in ]r_0,2r_0[$ satisfying
\begin{equation}\label{quasi tutto}
 \max_i\big\{\H^n(\Gamma_i(\rho))\big\} \geq (\sigma_n - \e_0)\rho^n.
\end{equation}
Indeed, by \eqref{nulldens}, we consider $r_0$ so small that $\H^n(\mathcal R\cap B_s(x))\leq \e_0 s^n$ for every $s \leq 2r_0$.  
We first claim the existence of a closed set $R \subset ]r_0,2r_0[$ of positive measure such that the following holds $\forall \rho \in R$:
\begin{itemize}
\item [(i)] \[
  \lim_{\sigma\in R, \sigma\rightarrow\rho} \H^{n-1}(\mathcal R\cap \partial B_\sigma)= \H^{n-1}(\mathcal R\cap \partial B_\rho);
 \]
 \item [(ii)] $\H^n(K\cap\partial B_\rho)=0$;
\item [(iii)] $\H^{n-1}(\mathcal R\cap \partial B_\rho)\leq C\e_0\rho^{n-1}$.
\end{itemize}
The existence of a set of positive measure $R'$ such that (iii) holds at any $\rho\in R'$ is an obvious consequence of the coarea formula and of Chebycheff's inequality, provided the universal constant $C$ is larger than $2^n$. Moreover, condition (ii) holds at all but countable many radii. Next, since the map $t\mapsto \H^{n-1}(\mathcal R\cap \partial B_t)$ is measurable, by Lusin's theorem we can select a closed subset $R$ of $R'$ with positive measure for which (i) holds at every radius. 

Fix now a point $\rho\in R$ of density $1$ for $R$: it turns out that $\rho$ satisfies indeed condition \eqref{quasi tutto}. In order to show that estimate, we first choose $(\rho_k)\subset R$, $\rho_k\uparrow \rho$ such that \eqref{slicingmisure} holds for $t=\rho_k$. 
Observe that, for every sequence of points $x_k\in\partial B_{\rho_k}\cap \Omega_i(\rho)$ 
converging to some $x_\infty$, we have that $x_\infty \in \Gamma_i(\rho)\cup (K \cap \pa B_\rho)$, otherwise there would exist $\tau>0$ such that $B_\tau(x_\infty)\cap\Omega_i(\rho)=\emptyset$, against the convergence of $x_k$ to $x_\infty$.  
In particular, rescaling everything at radius $\rho$, for every $\eta>0$ there exists $k(\eta)$ such that, for all $k\geq k(\eta)$ 
\[
 E_{k,i} := \frac{\rho}{\rho_k}\left ( \Omega_i(\rho)\cap \pa B_{\rho_k}\right )\subset U_\eta\left (\Gamma_i(\rho)\cup (K \cap\pa B_\rho)\right )\cap \pa B_\rho =:\Gamma_{i,\eta},
\]
where $U_\eta$ denotes the $\eta$-tubular neighborhood.  
 
 Observe that $\H^n(\Gamma_{i,\eta})\downarrow \H^n(\Gamma_i(\rho))$ as $\eta\downarrow 0$, because 
$\partial_{\pa B_\rho}\Gamma_i(\rho)\subset K \cap \pa B_\rho$ and (ii) holds. 
On the other hand, for every $\eta>0$, we can take $\Lambda_\eta$ compact subset of $\Gamma_i(\rho)$ with $\H^n(\Gamma_i(\rho)\setminus \Lambda_\eta)<\eta$ and $U_\alpha(\Lambda_\eta)\cap B_\rho \subset \Omega_i(\rho)$ for some small $\alpha(\eta)>0$. 
Therefore, for $\rho - \rho_k< \alpha(\eta) $, the following holds 
\[
 E_{k,i}\supset \Lambda_\eta.
\]
Since $\Lambda_\eta \subset E_{k,i} \subset \Gamma_{i,\eta}$ for every $k>k(\eta)$, $\Lambda_\eta \subset \Gamma_i(\rho) \subset \Gamma_{i,\eta}$ and 
$\H^n(\Gamma_{i,\eta}\setminus \Lambda_\eta)\downarrow 0$ as $\eta\downarrow 0$,
we easily deduce that $E_{k,i} \rightarrow \Gamma_i(\rho)$ in $L^1(\pa B_\rho)$. 
Moreover, by \eqref{slicingmisure}
\begin{equation}\label{pp}
 \sum_i \H^{n-1}(\partial^*_{\pa B_{\rho_k}}(\Omega_i(\rho)\cap \pa B_{\rho_k})) \leq 2 \H^{n-1} (\mathcal R \cap \pa B_{\rho_k}).
\end{equation}
The $L^1$ convergence shown above, the lower semicontinuity of the perimeter and the definition of $E_{k,i}$ imply that 
\begin{equation*}
\begin{split}
\sum_i \H^{n-1}(\partial^*_{\pa B_{\rho}}\Gamma_i(\rho))&\leq \liminf_k \sum_i \H^{n-1}(\partial^*_{\pa B_{\rho}}E_{k,i})  \\
& =\liminf_k\left(\frac{\rho}{\rho_k}\right)^{n-1} \sum_i \H^{n-1}(\partial^*_{\pa B_{\rho_k}}(\Omega_i(\rho)\cap \pa B_{\rho_k})). 
 \end{split}
 \end{equation*}
Plugging \eqref{pp}, conditions (i) and (iii) in the previous equation, we get
\begin{equation*}
  \sum_i \H^{n-1}(\partial^*_{\pa B_{\rho}}\Gamma_i(\rho)) \leq 2 \liminf_k \H^{n-1} (\mathcal R \cap \pa B_{\rho_k}) =2 \H^{n-1} (\mathcal R \cap \pa B_{\rho})  \leq C\e_0 \rho^{n-1}.
 \end{equation*}
Let us denote by $\Gamma_0(\rho)$ the element of largest $\H^n$ measure among the $\Gamma_i(\rho)$: 
applying the isoperimetric inequality \cite[Lemma 9]{DelGhiMag} in $\pa B_\rho$ we get
\[
 \sigma_n\rho^n - \H^n(\Gamma_0(\rho))=
 \sum_{i\ge 1}\H^n(\Gamma_i(\rho))\leq C \left(\H^{n-1}(\mathcal R\cap \pa B_\rho\right))^{\frac{n}{n-1}}\leq C \e_0^{\frac{n}{n-1}} \rho^n,
\]
namely $\H^n(\Gamma_0(\rho)) \geq ( \sigma_n -C \e_0^{\frac{n}{n-1}}) \rho^n$, which proves \eqref{quasi tutto}. 

\medskip

\textit{Step 2.} In this second step we
let $\Gamma_0^j(\rho)$ be a $\sim_{K_j}$-equivalence class of largest $\H^n$-measure in $\pa B_\rho \setminus K_j$ and
we claim that: 
\begin{equation}\label{al limite}
 \liminf_j \H^n(\Gamma_0^j(\rho)) \geq \H^n(\Gamma_0(\rho)),
\end{equation}
(where, consistently, $\Gamma_0 (\rho)$ is a $\sim_K$-equivalence class of largest measure in $\pa B_\rho\setminus K$; note that
the latter estimate, combined with Step 1, implies, for $\varepsilon_0$ sufficiently small and $j$ sufficiently large, that such equivalence classes of largest $\H^n$ measure are indeed uniquely determined).

Recall that $\Omega_0(\rho)$ is associated to $\Gamma_0(\rho)$ according to Definition \ref{def good class}. 
Let us consider $\delta>0$ sufficiently small and $\bar\Gamma\subset\subset \Gamma_0(\rho)$ verifying
\begin{equation}\label{misura interna}
\H^n(\bar\Gamma)\geq \H^n(\Gamma_0(\rho)) -\delta\, .
\end{equation}
Next, by compactness, we can uniformly separate $\bar\Gamma$ and $K$, that is we can pick $\eta>0$ sufficiently small so that
\begin{equation}\label{eq:tildegamma}
 V := \bigcup_{s\in [\rho-\eta,\rho]} \frac{s}{\rho}\bar\Gamma = 
 \Big\{x\in \overline{B_\rho}\setminus B_{\rho-\eta} : \rho \frac{x}{|x|}\in \bar\Gamma\Big\}\subset \subset \overline{B}_\rho\setminus K\, . 
\end{equation}

Next we choose an open connected subset of $\Omega_0(\rho)$ with smooth boundary, denoted by $\Omega(\rho)$, such that 
\begin{equation}\label{eq:stima1}
|\Omega_0(\rho)\setminus \Omega(\rho)| <\delta\eta . 
\end{equation}
The set $\Omega(\rho)$ can be constructed as follows:
\begin{itemize}
 \item first one considers $\Lambda \subset\subset \Omega_0(\rho)$ compact with $|\Omega_0(\rho)\setminus \Lambda| <\delta\eta$: 
 this can be achieved for instance looking at a Whitney subdivision of $\Omega_0(\rho)$, taking the union of the cubes with side length bounded from below by a small number;
 \item $\Lambda$ can be enlarged to become connected by adding, if needed, a finite number of arcs at positive distance from $\partial \Omega_0(\rho)$;
\item we can finally take a $C^\infty$ function $f:\R^{n+1}\rightarrow \R$ such that $f|_{\R^{n+1}\setminus \Omega_0(\rho)} = 0$, $f|_\Lambda = 1$ and $0\leq f \leq 1$: 
by the Morse-Sard Theorem, one can choose $t \in ]0,1[$ such that $\{f=t\}$ is a $C^\infty$ submanifold.
\item the connected component of $\{f>t\}$ containing $\Lambda$ satisfies the required assumptions.
\end{itemize}
Since $\Omega(\rho) \subset\subset \Omega_0(\rho)$, by weak convergence $\H^{n}(K_j\cap \Omega(\rho)) \rightarrow 0$; moreover 
since $\Omega(\rho)$ is smooth and connected, it satisfies the isoperimetric inequality 
\begin{equation}\label{eq:invasion}
 |\Omega(\rho) \setminus \Omega^j(\rho) | \leq {\rm Iso}(\Omega (\rho)) \H^n(\Omega(\rho)\cap K_j)^{\frac{n+1}{n}} = o_j (1) \, .
\end{equation}
where $\Omega^j(\rho)$ is the connected component of $\Omega(\rho)\setminus K_j$ of largest volume and ${\rm Iso}(\Omega (\rho))$ is the isoperimetric constant of the smooth connected domain $\Omega(\rho)$ (for the isoperimetric inequality see \cite[Theorem 4.5.2(2)]{FedererBOOK} and use
the fact that $\partial^* \Omega^j (\rho)\subset K_j$, which has been observed above). 
 
Obviously \eqref{eq:tildegamma} implies $\H^n(K_j\cap V)\rightarrow 0$ and, by projecting $K_j\cap V$ on 
$\pa B_\rho$ via the radial map $\Pi: B_\rho \ni x \mapsto \tfrac{\rho}{|x|}x\in \pa B_\rho$, we easily get that the set
$$
 \bar\Gamma^j :=\{y\in\bar\Gamma : \Pi^{-1}(y)\cap V \cap K_j = \emptyset \}
$$
verifies
\begin{equation}\label{q}
 \begin{split} 
 \H^n(\bar\Gamma^j)&=\H^n(\bar\Gamma)-\H^n(\Pi(  K_j \cap V)) \\
 & \geq\H^n(\bar\Gamma) - (\Lip\Pi|_{B_\rho\setminus B_{\rho-\eta}})^n \H^n(K_j\cap V) = \H^n(\bar\Gamma) -  o_j(1).
 \end{split}
\end{equation}
We deduce from \eqref{q} that
\begin{equation}\label{eq:V}
|V \cap \Pi^{-1}(\bar\Gamma^j)| =|V|-|V\setminus \Pi^{-1}(\bar\Gamma^j)| \geq |V| - \eta \H^n(\bar\Gamma\setminus\bar\Gamma^j) \geq |V| - o_j(1). 
\end{equation}
The previous inequality, \eqref{eq:tildegamma}, \eqref{eq:stima1} and \eqref{eq:invasion} in turn imply that
\begin{equation}\label{almost V}
\begin{split}
|V\setminus (\Omega^j(\rho)\cap \Pi^{-1}(\bar\Gamma^j))| & \quad \, \, \,\, \leq \quad \, \, \,\, |V\setminus \Omega^j(\rho)|+ |V\setminus \Pi^{-1}(\bar\Gamma^j)|   \\
 &   \overset{\eqref{eq:tildegamma}, \eqref{eq:V}}{\leq}   |\Omega_0(\rho)\setminus \Omega(\rho)|+ |\Omega(\rho)\setminus \Omega^j(\rho)|+ o_j(1) \\
 &\overset{\eqref{eq:stima1}, \eqref{eq:invasion}}{\leq}   \eta \delta +   o_j(1).
\end{split}
\end{equation}
If $x \in V\cap\Omega^j(\rho)   \cap \Pi^{-1}(\bar\Gamma^j)$,  then $x\sim_{K_j}\Pi(x)$ 
using as a path simply the radial segment $[x,\Pi(x)[$; moreover we can always connect two points belonging to $V\cap\Omega^j(\rho)\cap \Pi^{-1}(\bar\Gamma^j)$ with a path inside $\Omega^j(\rho)$. 
But, by \eqref{almost V}, the endpoints $\Pi(x)$ of these segments must cover all but a small fraction $G_j$ of  $\bar\Gamma^j$ of measure $o_j(1)$. 
Indeed we can estimate the complement set $G_j :=\Pi(V \setminus (\Omega^j(\rho)\cap \Pi^{-1}(\bar\Gamma^j)) )$ using the coarea formula and the 
self similarity of the shells:
\begin{equation*}
\begin{split}\frac{\rho}{n+1}  \left(1 - \left (1-\frac{\eta}{\rho}\right)^{n+1}\right)\H^n(G_j) &= \int_{\rho-\eta}^\rho \H^n\left (\frac{t}\rho G_j\right )dt \\
&\leq  |V\setminus (\Omega^j(\rho)\cap \Pi^{-1}(\bar\Gamma^j))| \leq  \delta\eta +   o_j(1),
\end{split}
\end{equation*}
which yields, for $\eta$ small enough (namely smaller than a positive constant $\eta_0 (n, \rho)$), 
\begin{equation}\label{eq:sfinimento}
\H^n(G_j) \leq 2 \delta + o_j(1)\, . 
\end{equation}
By concatenating the paths we conclude that $\bar\Gamma^j\setminus G_j$ must be contained in a unique equivalence class $\Gamma^j_i(\rho)$. 
We remark that for the moment we do not know whether $\Gamma_i^j(\rho)$ is an equivalence class of $\pa B_\rho \setminus K_j$ with largest measure. Summarizing the inequalities achieved so far we conclude
\begin{equation*}
\begin{split}
\H^n (\Gamma_0^j (\rho)) \geq \H^n(\Gamma_i^j(\rho))  & \, \, \,  \geq \, \, \, \, \H^n(\bar\Gamma^j\setminus G_j)  \overset{\eqref{eq:sfinimento}}{\geq} 
\H^n(\bar\Gamma^j)-2\delta - o_j(1)  \\
&\overset{\eqref{q}}{\geq} \H^n(\bar\Gamma) - o_j(1) - 2\delta \overset{\eqref{misura interna}}{\geq}
\H^n(\Gamma_0(\rho)) - o_j(1) - 3\delta 
\end{split}
\end{equation*}
In particular, letting first $j\uparrow\infty$ and then $\delta\downarrow 0$ we achieve \eqref{al limite}.
 
\medskip

\textit{Step 3.} We recover a straightforward contradiction since, by the density lower bound (d.l.b.) proven in Lemma \ref{l:density}, the good class property (g.c.p.) of $\P(H)$, the lower semicontinuity (l.s.) in the weak convergence \eqref{muj va a mu} and the bound \eqref{cost per area}, we get
 \begin{equation*}
 \begin{split}
 c_0\rho^n &\overset{d.l.b.}{\leq} \mu(B_{x,\rho}) \overset{l.s.}{\leq} \liminf_j \FF(K_j,B_{x,\rho})\\
  &\overset{g.c.p.}{\leq} \liminf_j \FF(\pa B_{x,\rho}\setminus \Gamma_0^j(\rho)) \overset{\eqref{cost per area}}{\leq} \Lambda \liminf_j \H^n(\pa B_{x,\rho}\setminus \Gamma_0^j(\rho)).
  \end{split}
  \end{equation*}
Plugging in \eqref{al limite} and \eqref{quasi tutto} (both relative to the complementary sets), we get 
 \begin{equation*}
 \begin{split}
 c_0\rho^n &\overset{\eqref{al limite}}{\leq} \Lambda \H^n(\pa B_{x,\rho}\setminus \Gamma_0(\rho))\overset{\eqref{quasi tutto}}{\leq} \Lambda \e_0 \rho^n,
  \end{split}
  \end{equation*}
 which is false for $\e_0$ small enough. We conclude $\H^n(\mathcal N)=0$, hence the rectifiability of the set $K$.

\subsection{Proof of Theorem \ref{thm generale}: semicontinuity} We are now ready to complete the proof of Theorem \ref{thm generale}, namely to show $\liminf_j\FF(K_j)\ge \FF(K)$ and \(\mu=F(x,T_K(x))\H^n \res K\), when the integrand $F$ is elliptic.\\
\noindent
We claim indeed that $\theta(x) \geq F(x,T_K(x))$ for every $x$ where the rectifiable set $K$ has an approximate tangent plane 
$\pi = T_K (x)$ and $\theta$ is approximately continuous. Let $x$ be such point and assume, without loss of generality, that $x=0$. 
We therefore have the following limit in the weak$^{*}$ topology:
\begin{equation}\label{eq:full}
\theta(r\cdot) \H^n \res \frac Kr \weak \theta(0) \H^n\res \pi\qquad \mbox{ for }\qquad r\downarrow 0.
\end{equation}

For a suitably chosen sequence $r_j\downarrow 0$, consider the corresponding rescaled sets $\tilde{K}_j := \frac{1}{r_j} K_j$ and rescaled measures
 $\tilde{\mu}_j :=\tilde{F}_j \mathcal{H}^n \res \tilde{K}_j$, where $\tilde{F}_j (y) := F (r_j y, T_{\tilde{K}_j} (y))$. 
With a diagonal argument, if $r_j\downarrow 0$ sufficiently slow (since the blow-up to $\pi$ in \eqref{eq:full} happens on the full continuous limit $r\downarrow 0$), then we can assume that the $\tilde{\mu}_j$ are converging weakly$^*$ in $\mathbb R^{n+1}$ to $\tilde \mu= \theta (0) \mathcal{H}^n \res \pi$.
Note moreover that $\tilde{\mu}_j (B_1) \to \omega_n \theta (0)$ because $\tilde{\mu} (\partial B_1 )=0$.

Let $\tilde{\Omega}_j$ be the largest connected component of $B_1 \setminus \tilde{K}_j$. As already observed, $\tilde{\Omega}_j$ is a Caccioppoli set and $\partial^* \tilde{\Omega}_j \subset \tilde{K}_j$. Up to subsequences, we can assume that $\tilde{\Omega}_j$ converges as a Caccioppoli set to some $\tilde{\Omega}$ whose reduced boundary in $B_1$ must be contained in $\pi$. We thus have three alternatives:
\begin{itemize}
\item[(i)] $\tilde{\Omega}$ is the lower or the upper half ball of $B_1\setminus \pi$. In this case, the lower semicontinuity of the energy $\mathbf{F}$ on Caccioppoli sets (which follows from \cite[5.1.2 \& 5.1.5]{FedererBOOK}) implies
\begin{align*}
\omega_n r^n F (0, \pi) \leq &\liminf_{j\to \infty} \int_{\partial^* \Omega_j\cap B_1} F (0, T_{\partial^* \Omega_j} (y)) d\mathcal{H}^n (y)\nonumber\\
= &\liminf_{j\to \infty} \int_{\partial^* \Omega_j\cap B_1} F (r_j y, T_{\partial^* \Omega_j} (y)) d\mathcal{H}^n (y)\nonumber\\
\leq &\lim_{j\to \infty} \int_{\tilde{K}_j \cap B_1} \tilde{F} (y) \, d\mathcal{H}^n (y) = \lim_{j\to \infty}\tilde{\mu}_j (B_1) = \omega_n \theta (0)\, ,
\end{align*}
which is the desired inequality. 
\item[(ii)] $\tilde{\Omega}$ is the whole $B_1$;
\item[(iii)] $\tilde{\Omega}$ is the empty set. 
\end{itemize}
The third alternative is easy to exclude. Indeed in such case $|\tilde{\Omega}_j|$ converges to $0$. On the other hand, if we consider one of the two connected components of $B_1 \setminus U_{1/100} (\pi)$, say $A$, we know that $\mathcal{H}^n (\tilde K_j \cap A)$ converges to $0$ (since $\tilde{\mu}_j\rightharpoonup^* \theta (0) \mathcal{H}^n \res \pi$). The relative isoperimetric inequality implies that the volume of the largest connected component of $A\setminus \tilde K_j$ converges to the volume of $A$ (cf. the argument for \eqref{eq:invasion}). 

Consider next alternative (ii). We argue similarly to step 2 of the previous subsection. Consider a fixed $\varepsilon> 0$ and set $\bar \Gamma^+ =
(\partial B_1)^+\setminus U_{3\varepsilon} (\pi)$, where $(\partial B_1)^+ = \partial B_1 \cap \{x_{n+1}>0\}$, having set $x_{n+1}$ a coodinate direction orthogonal to $\pi$: in particular $\H^n(\bar\Gamma^+) \geq \sigma_n/2 -C\e$. 

Similary to step 2 consider 
\[
V = \bigcup_ {1-\varepsilon\leq s\leq 1} s \bar \Gamma^+\, 
\]
consisting of the segments $S_x := [(1-\varepsilon) x, x]$ for every $x\in \bar \Gamma^+$. In particular for $\varepsilon$ sufficiently small we have 
$V \subset B_1 \setminus U_{2\varepsilon} (\pi)$ and thus we know that
\begin{itemize}
\item[(a)] $\mathcal{H}^n (\tilde{K}_j \cap V) \to 0$;
\item[(b)] $|V\setminus \tilde{\Omega}_j|\to 0$. 
\end{itemize}
In particular, if we consider, as in step 3 above, the set $\tilde{G}_j^+\subset \bar{\Gamma}^+$ of points for which either $S_x \cap \tilde \Omega_j = \emptyset$ or $S_x \cap \tilde{K}_j \neq \emptyset$, we conclude that $\mathcal{H}^n (\tilde{G}^+_j) = o_j (1)$.

If we define symmetrically the sets $\bar{\Gamma}^-$ and $\tilde{G}^-_j$, the same argument gives us $\H^n(\bar\Gamma^-) \geq \sigma_n/2 -C\e$ 
as well as $\mathcal{H}^n (\tilde{G}^-_j) = o_j (1)$. Choosing now $\varepsilon$ small and an appropriate diagonal sequence, we conclude the existence of a sequence of sets  
$\tilde{\Gamma}_j =\bar{\Gamma}^+\cup\bar{\Gamma}^- \setminus(\tilde{G}^+_j\cup \tilde{G}^-_j) \subset \partial B_1 \setminus \tilde{K}_j$ 
with the property that
\begin{itemize}
\item $\mathcal{H}^n (\tilde{\Gamma}_j)$ converges to $\sigma_n$;
\item any two points $x,y\in \tilde{\Gamma}_j$ can be connected in $\bar B_1 (0)$ with an arc which does not intersect $\tilde{K}_j$. 
\end{itemize}
Therefore each $\tilde{\Gamma}_j$ must be contained in a unique equivalence class $\Gamma_{i(j)}(\tilde K_j,0,1)$.
Coming to the sets $K_j$ by scaling backwards, we find a sequence of sets $\Gamma_{i(j)} (K_j, 0, r_j)$ in the equivalence classes of Definition \ref{def:cup}
such that
\[
\lim_{j\to\infty} \frac{\mathcal{H}^n (\partial B_{0, r_j} \setminus \Gamma_{i(j)}(K_j, 0, r_j))}{r_j^n}=0\, .
\]
Considering the bound $\mathbf{F} (K_j \cap B_{r_j}) \leq \Lambda \mathcal{H}^n (\partial B_{0, r_j} \setminus \Gamma_{i(j)} (K_j, 0, r_j))$, we can pass to the rescaled measures again to conclude $\tilde{\mu}_j (B_1 (0)) = o_j (1)$, clearly contradicting the assumption that $\mu_j (B_1 (0))$ converges to the positive number $\theta (0)$.

\subsection{Proof of Proposition \ref{p:aggiuntiva}}
We first need the following estimates. 

\begin{lemma}\label{l:lista}
Let $K$ be the $n$-rectifiable set obtained in the previous section. For every $x$ where $K$ has an approximate tangent plane $T_K (x)$, let $O_x$ be the special orthogonal transformation of $\mathbb R^{n+1}$ mapping $\{x_1=0\}$ onto $T_K (x)$ and set
$\bar{Q}_{x,r} = O_x (Q_{x,r})$ and $\bar R_{x,r,\e r} = O_x (R_{x, r \e r})$. 

Then at $\mathcal{H}^n$ almost every $x\in K$ the following holds: for every $\e>0$ there exist $r_0=r_0(x) \leq\tfrac{1}{\sqrt{n+1}} \dist(x,H)$ such that, for $r\leq r_0 /2$, 
\begin{gather}
(\lambda \omega_n-\e)r^n\leq \mu(B_{x,r})\le (\theta(x)\omega_n+\e)r^n,\qquad
(\theta(x)-\e)r^n< \mu(\bar Q_{x,r})< (\theta(x)+\e)r^n, \label{uno}\\
\sup_{y\in B_{x,r_0}, \,S\in G(n+1,n)}|F(y,S) - F(x,S)|\leq \e.\label{tre}
\end{gather}
Moreover, for almost every such $r$, there exists  $j_0(r) \in \N$ such that for every $j\geq j_0$:
\begin{gather}
(\theta(x)\omega_n-\e)r^n \le\FF(K_j,B_{x,r})\leq (\theta(x) \omega_n + \e ) r^n,\label{due}\\
(\theta(x)-\e)r^n\leq \FF(K_j,\bar Q_{x,r})\leq (\theta(x)+\e)r^n, \qquad \FF(K_j,Q_{x,r}\setminus \bar R_{x,r,\e r})<\e r^n \label{quattro}.
\end{gather}
\end{lemma}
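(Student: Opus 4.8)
The assertions collected in Lemma~\ref{l:lista} are routine consequences of three facts: the rectifiability of $K$ together with the representation $\mu=\theta\H^n\res K$, which fix the blow-up behaviour of $\mu$ at $\H^n$-a.e.\ point; the uniform continuity \eqref{oscill} of $F$ on compact sets; and the weak$^*$ convergence $\mu_j\weak\mu$ from \eqref{muj va a mu}. The only point needing real attention is to isolate one $\H^n$-null exceptional set, \emph{not depending on $\e$}, outside of which all the claims hold, together with the relevant full-measure families of radii.

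\emph{Good points and blow-up of $\mu$.} Call $x\in K$ \emph{good} if the approximate tangent plane $T_K(x)$ exists, $\Theta^n(\H^n\res K,x)=1$, and $\theta$ is approximately continuous at $x$ with respect to $\H^n\res K$; by \cite[Chapter~3]{SimonLN} (recall that $\mu$, hence $\H^n\res K$, is finite) $\H^n$-a.e.\ point of $K$ is good, and by the semicontinuity proven in the previous subsection together with \eqref{cost per area} every good $x$ satisfies $\theta(x)\ge F(x,T_K(x))\ge\lambda$. Fix a good $x$; after a rotation and a translation we may take $x=0$ and $O_x=\Id$, and we write $\pi:=T_K(0)$ for the resulting coordinate hyperplane. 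By construction $\bar R_{0,1,\e}$ is a thickening of $\pi\cap\bar Q_{0,1}$ in the normal direction, so $\pi\cap\bar Q_{0,1}\subset\bar R_{0,1,\e}$. From the definition of approximate tangent plane and the approximate continuity and boundedness of $\theta$, for every Borel set $A$ with $\H^n(\pi\cap\partial A)=0$ one has
\begin{equation*}
\lim_{r\downarrow 0}\frac{\mu(rA)}{r^n}=\theta(0)\,\H^n\big(\pi\cap A\big)\,.
\end{equation*}
Since $\H^n(\pi\cap B_1)=\omega_n$, $\H^n(\pi\cap\bar Q_{0,1})=\H^n(\pi\cap\bar R_{0,1,\e})=1$, and the boundaries of $B_1$, $\bar Q_{0,1}$, $\bar R_{0,1,\e}$ meet $\pi$ in $\H^n$-null sets, this yields
\begin{equation*}
\lim_{r\downarrow0}\frac{\mu(B_{0,r})}{\omega_n r^n}=\theta(0),\qquad \lim_{r\downarrow0}\frac{\mu(\bar Q_{0,r})}{r^n}=\theta(0),\qquad \lim_{r\downarrow0}\frac{\mu(\bar Q_{0,r}\setminus\bar R_{0,r,\e r})}{r^n}=0\,.
\end{equation*}

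\emph{The estimates \eqref{uno}, \eqref{tre} and the choice of $r_0$.} The first limit and $\theta(0)\ge\lambda$ give the ball estimate in \eqref{uno}, the second limit gives the cube estimate there, and \eqref{tre} is precisely \eqref{oscill} on a fixed compact neighbourhood of $\{x\}\times G(n+1,n)$. Consequently we may choose $r_0=r_0(x,\e)\le\tfrac1{\sqrt{n+1}}\dist(x,H)$ so small (in particular so that all the balls, cubes and slabs below are compactly contained in $\R^{n+1}\setminus H$) that, for every $r\le r_0/2$, \eqref{tre} holds and moreover $|\mu(B_{0,r})-\theta(x)\omega_n r^n|<\tfrac\e2 r^n$, $|\mu(\bar Q_{0,r})-\theta(x) r^n|<\tfrac\e2 r^n$ and $\mu(\bar Q_{0,r}\setminus \bar R_{0,r,\e r})<\tfrac\e2 r^n$; in particular \eqref{uno} holds.

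\emph{Passage to the sets $K_j$: \eqref{due} and \eqref{quattro}.} Because $\FF(K_j,A)=\mu_j(A)$ for every Borel $A$ and $\mu_j\weak\mu$ in $\R^{n+1}\setminus H$, we have $\mu_j(A)\to\mu(A)$ whenever $A\subset\subset\R^{n+1}\setminus H$ and $\mu(\partial A)=0$ (see e.g.\ \cite[Theorem 6.9]{mattila}). For distinct $r$ the topological boundaries of the balls $B_{0,r}$, of the cubes $\bar Q_{0,r}$, and of the slabs $\bar R_{0,r,\e r}$ overlap in $\H^n$-negligible (hence $\mu$-negligible) sets, so, $\mu$ being finite, $\mu(\partial B_{0,r})=\mu(\partial\bar Q_{0,r})=\mu(\partial\bar R_{0,r,\e r})=0$ for all but countably many $r\le r_0/2$. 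For such $r$ we let $j\to\infty$ in
\begin{equation*}
\FF(K_j,B_{0,r})=\mu_j(B_{0,r}),\qquad \FF(K_j,\bar Q_{0,r})=\mu_j(\bar Q_{0,r}),\qquad \FF(K_j,\bar Q_{0,r}\setminus\bar R_{0,r,\e r})=\mu_j(\bar Q_{0,r})-\mu_j(\bar R_{0,r,\e r})
\end{equation*}
(the last identity uses $\bar R_{0,r,\e r}\subset\bar Q_{0,r}$, valid for $\e\le 1$), obtaining the limits $\mu(B_{0,r})$, $\mu(\bar Q_{0,r})$ and $\mu(\bar Q_{0,r}\setminus\bar R_{0,r,\e r})$. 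By the previous paragraph each of these is within $\tfrac\e2 r^n$ of the target value prescribed in \eqref{due} and \eqref{quattro}, so there is $j_0(r)$ for which these bounds hold whenever $j\ge j_0(r)$.

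\emph{Main difficulty.} There is no genuine obstacle here: the entire content is the blow-up relation $r^{-n}\mu(x+r\,\cdot\,)\weak\theta(x)\,\H^n\res T_K(x)$, standard for rectifiable measures with bounded, approximately continuous density, together with bookkeeping. The single delicate point is that the exceptional $\H^n$-null set of ``bad'' $x$ must be independent of $\e$; this is why the definition of good point involves only $\e$-free properties (existence of the tangent plane, unit density, approximate continuity of $\theta$), the dependence on $\e$ being confined to the radius $r_0(x,\e)$ and to the thresholds $j_0(r)$.
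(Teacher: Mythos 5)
Your proposal is correct and follows essentially the same route as the paper: fix a point where the approximate tangent plane exists and $\theta$ is approximately continuous, use the blow-up $r^{-n}\mu(x+r\,\cdot)\weak\theta(x)\,\H^n\res T_K(x)$ to get \eqref{uno} and the $\mu$-estimates for balls, cubes and slabs, use \eqref{oscill} for \eqref{tre}, the inequality $\theta(x)\ge F(x,T_K(x))\ge\lambda$ from the semicontinuity step for the lower bound in \eqref{uno}, and then weak$^\star$ convergence $\mu_j\weak\mu$ together with $\mu(\partial B_r)=\mu(\partial\bar Q_r)=\mu(\partial\bar R_{r,\e r})=0$ for a.e.\ $r$ to transfer these to $\FF(K_j,\cdot)=\mu_j(\cdot)$ in \eqref{due}--\eqref{quattro}. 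The only cosmetic difference is that the paper obtains the slab estimate from the containment $B_r\cap K\subset U_{\e r}(T_K)$ (rectifiability plus the density lower bound), whereas you derive it directly from the blow-up relation applied to $A=\bar Q_{0,1}\setminus\bar R_{0,1,\e}$; both are standard and equivalent for this purpose.
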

\begin{proof}
Fix a point $x$ where $T_K(x)$ exists and $\theta$ is approximately continuous: for the sake of simplicity, we can assume $x=0$ and that, after a rotation, the approximate tangent space at $0$ coincides with $T_K=\{x^{n+1}=0\}$. 
For  almost every $r\leq r_0/2$ we can suppose that $\mu(\pa B_r)=\mu(\pa Q_{r})=\mu(\pa R_{r,\e r})=0$; 
moreover by  rectifiability and the density lower bound \eqref{lower density estimate mu}, we also know that 
$B_r \cap K \subset U_{\e r}(T_K) $ (see the proof above). The second equation in \eqref{quattro} follows than by weak convergence.  We also know that, up to further reducing $r_0$,  for $r\leq r_0/2$, \eqref{3} and \cite[Theorem 2.83]{AFP} imply that
\begin{gather}
(\theta(0)\omega_n-\e)r^n < \mu(B_r)< (\theta(0)\omega_n+\e)r^n\label{2},\\
	(\theta(0)-\e)r^n< \mu(Q_r)< (\theta(0)+\e)r^n    \label{6}.
\end{gather}
Again by weak convergence, we recover \eqref{due} and the first equation in \eqref{quattro}. 
Moreover \eqref{tre} is a consequence of \eqref{oscill}. 
Finally \eqref{uno} follows from \eqref{2}, \eqref{6} and $\theta\geq \lambda$, whereas the latter bound is a consequence of the previous subsection where we have shown
$\theta (x)\geq F (x, T_K (x))$.
\end{proof}

 Assume w.l.o.g. $x=0$. 
 Arguing by contradiction, we assume that $\theta(0)=F(0,T_K(0))+\sigma$ for some  $\sigma>0$ and let $\e<\min\{\tfrac{\sigma}{2},\frac{\lambda\s}{ 4\Lambda}\}$. 
As a consequence of \eqref{quattro}, there exist $r$ and $j_0=j_0(r)$ such that
  \begin{equation}
    \label{buonpunto}
      \FF(K_j,Q_{r})>\Big(F(0,T)+\frac{\s}2\Big)\,r^n\,,\qquad \FF(K_j,Q_{r}\setminus R_{r,\e r})<\frac{\lambda\s}{ 4\Lambda}\,r^n,\qquad\forall j\ge j_0\,.
  \end{equation}
 Consider the map $P\in \D(0,r)$  defined in \cite[Equation 3.14]{DePDeRGhi} which collapses \(R_{r(1-\sqrt{\e}),\e r}\) onto the tangent plane $T_K$ and satisfies  $\|P - Id\|_\infty +\Lip (P - Id)\le C\,\sqrt\e$. Exploiting the fact that \(\mathcal P(H)\) is a deformation class and by almost minimality of $K_j$, we find that
  \begin{equation*}
  \begin{split}
    \FF(K_j,Q_{r})-o_j(1) &\le 
    \underbrace{\FF( P(K_j),P(R_{(1-\sqrt{\e})r,\e r}))}_{I_1}
    +
    \underbrace{\FF( P(K_j),P(R_{r,\e r}\setminus R_{(1-\sqrt{\e})r,\e r}))}_{I_2} \\
    &+
    \underbrace{\FF( P(K_j),P(Q_{r}\setminus R_{r,\e r}))}_{I_3}\,.
    \end{split}
  \end{equation*}
  By the properties of $P$ and \eqref{tre}, we get $I_1\le (F(0,T_K)+\e) r^n$, while, by \eqref{buonpunto} and equation \eqref{cost per area}
  \[
  I_3\leq \frac{\Lambda}{\lambda}\, (\Lip P)^n\,  \FF(K_j,Q_{r}\setminus R_{r,\e r})<(1+C\,\sqrt\e)^n\,\frac{\s}{4 }\,r^n\,.
  \]
  Since $\FF( P(K_j),P(R_{r,\e r}\setminus R_{(1-\sqrt{\e})r,\e r}))\leq \frac{\Lambda}{\lambda}(1+C\sqrt{\e})^n\FF(K_j,R_{r,\e r}\setminus R_{(1-\sqrt{\e})r,\e r})$ and $ R_{r,\e r}\setminus R_{(1-\sqrt{\e})r,\e r}\subset Q_{(1-\sqrt\e)r}\setminus  Q_{r}$, by \eqref{quattro} we can also bound
 \begin{multline*}
  I_2=\FF( P(K_j),P(R_{r,\e r}\setminus R_{(1-\sqrt{\e})r,\e r}))  \le\frac{\Lambda}{\lambda}(1+C\sqrt{\e})^n\FF(K_j,Q_{r}\setminus Q_{(1-\sqrt\e)r})
    \\
    \le C(1+C\sqrt{\e})^n\Big((F(0,T_K)+\s+\e)-(F(0,T_K)+\s-\e)(1-\sqrt\e)^n\Big)\,r^n 
    \le C\sqrt{\e}r^n.
\end{multline*}

Hence, as $j\to\infty$, by \eqref{uno}
  \[
  \Big(F(0,T_K)+\frac{\s}2\Big) r^n\le (F(0,T_K)+\e) r^n +C\sqrt{\e} r^n+(1+C\,\sqrt\e)^n\,\frac{\s}{4}\,r^n\,:
  \]
 dividing by $r^n$ and letting $\e\downarrow 0$ provides the desired contradiction. 

 We obtain that $\theta(x)\leq F(x,T_K(x))$ almost everywhere and, together with the previous step, \(\mu=F(x,T_K(x))\H^n \res K\).

\section{Proof of Theorem \ref{thm plateau}}\label{s4}

Most of the proof of Theorem \ref{thm plateau} relies on the following elementary geometric remark.

\begin{lemma}\label{lemma curve intersezione}
   If $K\in\F(H,\CC)$, $B_{x,r}\cc\R^{n+1}\setminus H$, and $\g\in\CC$,
   then either $\g\cap (K\setminus B_{x,r})\ne\emptyset$, or there exists a connected component $\s$ of
   $\g \cap \overline{B_{x,r}}$ which is homeomorphic to an interval and whose end-points belong to two distinct equivalence classes $\Gamma_i(x,r)$'s of $\pa B_{x,r}\setminus K$ in the sense of Definition \ref{def good class}. 
\end{lemma}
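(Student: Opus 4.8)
The plan is to argue by contradiction and exploit the topology of the spanning condition. So suppose $K \in \F(H,\CC)$, fix $B_{x,r} \cc \R^{n+1}\setminus H$ and $\g \in \CC$, and assume that $\g \cap (K \setminus B_{x,r}) = \emptyset$. First I would observe that since $K \cap \g \ne \emptyset$ (because $\g \in \CC$ and $K$ is in the good class $\F(H,\CC)$), all intersection points of $\g$ with $K$ must actually lie in $\overline{B_{x,r}}$, hence in fact in $B_{x,r} \cup \partial B_{x,r}$; moreover since $\g$ is a smooth embedded circle and $K$ is relatively closed, $\g \cap K$ is a nonempty compact set contained in $\overline{B_{x,r}}$. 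The key reduction is then to analyze the components of $\g \cap \overline{B_{x,r}}$: since $\g$ is an embedded $S^1$ and $\overline{B_{x,r}}$ is a closed ball, each connected component of $\g \cap \overline{B_{x,r}}$ is either all of $\g$ or a closed sub-arc of $\g$ homeomorphic to a point or an interval, with endpoints on $\partial B_{x,r}$.

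Next I would rule out the degenerate situation and extract the desired arc. If $\g \subset \overline{B_{x,r}}$ entirely, then $\g \subset B_{x,r}$ would follow unless $\g$ touches $\partial B_{x,r}$; but in any case $\g$ is a circle inside the closed ball, so it is null-homotopic in $\overline{B_{x,r}} \setminus K$... wait, that is not quite automatic. The cleaner route: I claim there must exist a component $\s$ of $\g \cap \overline{B_{x,r}}$ whose two endpoints $y_0, y_1 \in \partial B_{x,r}$ lie in \emph{distinct} equivalence classes $\Gamma_i(K,x,r)$. Suppose not: then every arc-component of $\g \cap \overline{B_{x,r}}$ has both endpoints in the same $\Gamma_i$, and I would use this to build a homotopy of $\g$, rel the part outside $B_{x,r}$, into a curve $\g'$ that \emph{misses} $K$ entirely --- replacing each such arc $\s$ (which runs through $B_{x,r}$, possibly hitting $K$) by a path $\s'$ joining its two endpoints inside $\overline{B_{x,r}}\setminus K$, which exists precisely because the endpoints are $\sim_{K,x,r}$-equivalent (that is what the equivalence relation of Definition \ref{def:cup} gives: a path in $\overline{B_{x,r}}\setminus K$ with interior in $B_{x,r}$). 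Since $\g$ and $\g'$ agree outside $B_{x,r}$ and $B_{x,r} \cc \R^{n+1}\setminus H$ is simply connected, $\g' \in \CC_H$ is homotopic to $\g$ in $\R^{n+1}\setminus H$, hence $\g' \in \CC$ because $\CC$ is closed by homotopy. But $\g' \cap K = \emptyset$, contradicting $K \in \F(H,\CC)$. This forces the existence of the claimed arc $\s$.

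The main technical obstacle is the construction of the homotopy and the careful handling of components: one must check that the finitely many (or countably many, but only finitely many of positive length are relevant since $\g$ is compact and an embedding, so $\g\cap \overline{B_{x,r}}$ has finitely many components) arc-components can be replaced simultaneously and that the resulting $\g'$ can be taken to be again a smooth embedding of $S^1$ --- this requires a small perturbation/smoothing argument and a general-position argument to keep the replacement paths disjoint and embedded, and to ensure they can be chosen inside $\R^{n+1}\setminus H$ close to $\partial B_{x,r}$. A subtle point is that the path provided by $\sim_{K,x,r}$ has its interior in the open ball $B_{x,r}$ and only its endpoints on $\partial B_{x,r}$, which matches exactly what we need to glue it to the portion of $\g$ outside $B_{x,r}$; one should also note the limiting case where an arc-component is a single point $\{y\}\subset \partial B_{x,r}$, in which case there is nothing to replace. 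Once these topological bookkeeping issues are settled, the contradiction with the spanning property $K\cap\g'\ne\emptyset$ closes the argument.
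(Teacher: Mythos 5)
Your proposal is correct and follows essentially the same contradiction argument as the paper: assume every arc-component of $\g\cap\overline{B_{x,r}}$ has both endpoints in the same $\Gamma_i$, replace each such arc by a path in $\overline{B_{x,r}}\setminus K$ through the corresponding $\Omega_i$, and conclude that the resulting (smoothed, perturbed) loop lies in the same class of $\pi_1(\R^{n+1}\setminus H)$ as $\g$ while avoiding $K$, contradicting the spanning condition. The technical obstacles you flag (reduction to transverse intersection with $\partial B_{x,r}$ to get finitely many arcs, disjointness of the replacement paths via general position with $n\geq 2$, and smoothing $\bar\g$ to a nearby embedded curve in $\CC_H$) are exactly the points the paper handles.
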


\begin{proof}
  [Proof of Lemma \ref{lemma curve intersezione}] The proof is analogous to \cite[Lemma 10]{DelGhiMag}, we briefly sketch the argument to  highlight the main differences. Assuming that $\g$ and $\pa B_{x,r}$ intersect transversally (see Step 2 in the proof of \cite[Lemma 10]{DelGhiMag} for the reduction to this case), we can find finitely many mutually disjoint closed arcs $I_i\subset S^1$, $I_i=[a_i,b_i]$, such that $\g\cap B_{x,r}=\bigcup_i\g((a_i,b_i))$ and $\g\cap\pa B_{x,r}=\bigcup_i\{\g(a_i),\g(b_i)\}$. Arguing by contradiction we may assume that for every $i$ there exists an equivalence class $\Gamma_i(x,r)$ of $\pa B_{x,r}\setminus K$ such that $\g(a_i),\g(b_i)\in \Gamma_i(x,r)$.  By connectedness of the associated $\Omega_i(x,r)$ (see the discussion after Definition \ref{def good class}) and the definition of $\Gamma_i(x,r)$, for each $i$ we can find a smooth embedding $\tau_i:I_i\rightarrow \Omega_i(x,r) \cup \Gamma_i(x,r)$ such that $\tau_i(a_i)=\g(a_i)$ and $\tau_i(b_i)=\g(b_i)$; moreover since $n\geq 2$, one can easily achieve this by enforcing $\tau_i(I_i)\cap\tau_j(I_j)=\emptyset$. Finally, we define $\bar\g$ by setting $\bar\g=\g$ on $S^1\setminus\bigcup_iI_i$, and $\bar\g=\tau_i$ on $I_i$.  In this way, $[\bar\g]=[\g]$ in
$\pi_1(\R^{n+1}\setminus H)$, with $\bar\g\cap K\setminus\overline{B_{x,r}}=\g\cap K\setminus\overline{B_{x,r}}=\emptyset$ and $\bar\g\cap K\cap\overline{B_{x,r}}=\emptyset$ by construction; that is, $\bar\g\cap K=\emptyset$. Since there exists $\widetilde\g\in\CC_H$ with $[\widetilde\g]=[\bar\g]=[\g]$ in $\pi_1(\R^{n+1}\setminus H)$ which is uniformly close to $\bar\g$, we infer $\widetilde\g\cap K=\emptyset$, and thus find a contradiction to $K\in\F(H,\CC)$.
\end{proof}

\begin{proof}
  [Proof of Theorem \ref{thm plateau}] (a): We start showing that $\F(H,\CC)$ is a good class in the sense of Definition \ref{def good class}.  To this end, we fix $V\in\F (H,\CC)$ and $x\in V$, and prove that a.e. $r\in (0, \dist (x, H))$ one has $V'\in\F(H,\CC)$, where $V'$ is  a cup competitor of $V$ in $B_{x,r}$. We thus fix $\g\in\CC$ and, without loss of generality, we assume that $\g\cap(V\setminus B_{x,r})=\emptyset$. By Lemma \ref{lemma curve intersezione}, $\gamma$ has an arc contained in $\overline{B_{x,r}}$ homeomorphic to $[0,1]$ and whose end-points belong to distinct equivalence classes of $\pa B_{x,r}\setminus V$; we denote by $\sigma :[0,1]\rightarrow \overline{B_{x,r}}$ a parametrization of this arc. Since $V'$ must contain all but one $\Gamma_i(x,r)$, either $\s(0)$ or $\s(1)$  belongs to $\g\cap V'\cap \pa B_{x,r}$. 
  
  \medskip

  \noindent (b):  The second statement of the Theorem, namely that $K\in\F(H,\CC)$, has exactly the same proof as in \cite[Theorem 4(b)]{DelGhiMag}. It follows that all the results of Theorem \ref{thm generale} apply.

  \medskip

  \noindent (c): We observe that $K$ is a $(\FF,0,\infty)$-minimal set, i.e.
  \[
  \FF(K)\le \FF(\vphi(K))
  \]
  whenever $\vphi:\R^{n+1}\to\R^{n+1}$ is a Lipschitz map such that $\vphi=\Id$ on $\R^{n+1}\setminus B_{x,r}$ and $\vphi(B_{x,r})\subset B_{x,r}$ for some $x\in\R^{n+1}\setminus H$ and $r<\dist(x,H)$. 

The above inequality is a consequence of $\vphi(K)\in\F (H,\CC)$, which can be proved via degree theory as in \cite[Theorem 4, Step 3]{DelGhiMag}.
\end{proof}

\end{document}